\newtheorem*{theorem*}{Theorem}
\newtheorem{maintheorem}{Theorem}[section]
\newtheorem{theorem}{Theorem}[section]
\newtheorem{proposition}[theorem]{Proposition}
\newtheorem{corollary}[theorem]{Corollary} 
\newtheorem{conjecture}[theorem]{Conjecture}
\theoremstyle{definition}
\newtheorem{definition}[theorem]{Definition}
\newtheorem{example}[theorem]{Example}
\newtheorem{remark}[theorem]{Remark}
\newtheoremstyle{myitemstyle}						
	{}			
	{}			
	{}			
	{}			
	{}			
	{.}			
	{ }			
	{}			
\theoremstyle{myitemstyle}
\newtheorem{myitemthm}{}
\newcommand{\martin}[1]{{\color{green!60!black} \sf Martin: [#1]}}
\newcommand{\martincolor}[1]{{\color{green!60!black} \sf #1}}
\newcommand{\Dmitry}[1]{{\color{blue} \sf Dmitry: [#1]}}
\newcommand{\dmitrycolor}[1]{{\color{blue} \sf #1}}
\newcommand{\yoav}[1]{{{\color{teal} \sf Yoav: [#1]}}}
\newcommand{\R}{\mathbb{R}}
\newcommand{\Z}{\mathbb{Z}}
\newcommand{\ZZ}{\mathbb{Z}}
\newcommand{\C}{\mathbb{C}}
\newcommand{\CC}{\mathbb{C}}
\newcommand{\RR}{\mathbb{R}}
\newcommand{\calA}{\mathcal{A}}
\newcommand{\calF}{\mathcal{F}}
\newcommand{\frakA}{\mathfrak{A}}
\newcommand{\frakG}{\mathfrak{G}}
\newcommand{\calH}{\mathcal{H}}
\newcommand{\calHbar}{\overline{\mathcal{H}}}
\newcommand{\calM}{\mathcal{M}}
\newcommand{\calO}{\mathcal{O}}
\newcommand{\calS}{\mathcal{S}}
\newcommand{\calT}{\mathcal{T}}
\newcommand{\calU}{\mathcal{U}}
\newcommand{\calX}{\mathcal{X}}
\DeclareMathOperator{\Spec}{Spec}
\DeclareMathOperator{\Hom}{Hom}
\DeclareMathOperator{\Aut}{Aut}
\DeclareMathOperator{\val}{val}
\DeclareMathOperator{\characteristic}{char}
\DeclareMathOperator{\Ram}{Ram}
\DeclareMathOperator{\ord}{ord}
\DeclareMathOperator{\PL}{\mathcal{PL}}
\newcommand{\an}{\mathrm{an}}
\newcommand{\ext}{\mathrm{ext}}
\newcommand{\dil}{\mathrm{dil}}
\newcommand{\trop}{{\mathrm{trop}}}
\newcommand{\et}{\mathrm{\acute{e}t}}
\newcommand{\src}{{\mathrm{src}}}
\newcommand{\tar}{{\mathrm{tar}}}
\newcommand{\Ga}{\Gamma}
\newcommand{\ga}{\gamma}
\newcommand{\plC}{\scalebox{0.8}[1.3]{$\sqsubset$}}
\newcommand{\ph}{\varphi}
\title{Abelian tropical covers} 
\date{}
\author{Yoav Len}
\address{Mathematical Institute, University of St Andrews, St Andrews KY16 9SS, UK}
\email{yoav.len@st-andrews.ac.uk}
\author{Martin Ulirsch}
\address{Institut f\"ur Mathematik, Goethe--Universit\"at Frankfurt,
60325 Frankfurt am Main, Germany}
\email{ulirsch@math.uni-frankfurt.de}
\author{Dmitry Zakharov}
\address{Department of Mathematics, Central Michigan University, Mount Pleasant, MI 48859, USA}
\email{dvzakharov@gmail.com}
\begin{document}

\maketitle

\begin{abstract} 
Let $\mathfrak{A}$ be a finite abelian group. In this article, we classify harmonic $\mathfrak{A}$-covers of a tropical curve $\Gamma$ (which allow dilation along edges and at vertices) in terms of the cohomology group of a suitably defined sheaf on $\Gamma$. We give a realizability criterion for harmonic $\mathfrak{A}$-covers by patching local monodromy data in an extended homology group on $\Gamma$. As an explicit example, we work out the case $\mathfrak{A}=\mathbb{Z}/p\mathbb{Z}$ and explain how realizability for such covers is related to the nowhere-zero flow problem from graph theory.
\end{abstract}

\setcounter{tocdepth}{1}
\tableofcontents


\section*{Introduction}

One of the starting points of tropical geometry is the observation that there is a deep analogy between the classical geometry of Riemann surfaces and the geometry of metric graphs, or more generally, (abstract) tropical curves. 

Let $X$ be a Riemann surface and let $B\subseteq X$ be a finite set. Ramified covers $X'\rightarrow X$ that are branched over $B$ are topological coverings of $X_0=X\setminus B$, and the Galois correspondence classifies such covers in terms of the fundamental group $\pi_1(X_0,x_0)$ for some base point $x_0\in X_0$. This beautiful and classical story is explained in many standard textbooks on Riemann surfaces, such as  \cite{Miranda, Szamuely, CavalieriMiles}. In particular, given a finite group $\frakG$, Galois covers with deck group $\frakG$ (not necessarily connected) are in one-to-one correspondence with monodromy representations $\pi_1(X_0,x_0)\to \frakG$. If $\frakG=\frakA$ is abelian, the universal coefficient theorem implies that the set of such covers is equal to
\begin{equation}\label{eq_cohomology}
\Hom\big(\pi_1(X_0,x_0),\frakA\big)\simeq \Hom\big(H_1(X_0,\ZZ),\frakA\big)\simeq H^1(X_0,\frakA).
\end{equation}
Replacing $H^1$ and $\pi_1$ with their \'etale counterparts, this correspondence holds over any algebraically closed field $k$ whose characteristic is zero or relatively prime to $|\frakA|$.





The natural tropical analogue of a non-constant holomorphic map of Riemann surfaces is a \emph{finite harmonic morphism} $\Gamma'\rightarrow\Gamma$ of metric graphs (or tropical curves), which is a continuous map with finite fibers that pulls back harmonic functions on open subsets of $\Gamma$ to harmonic functions on their preimages in $\Gamma'$. Contrary to the algebraic case, a harmonic morphism need not be a topological covering map (even after finitely many points are removed), as harmonic morphisms allow for \emph{dilation along edges}. Namely, via the natural identification of edges with real intervals, the restriction of a harmonic morphism $\phi:\Gamma'\rightarrow\Gamma$ to an edge $e'\subset\Ga'$ is given by 
\[
[0,a]\longrightarrow [0,d\cdot a],\qquad
x\longmapsto d\cdot x.
\]
The coefficient $d\in\Z_{>0}$ is known as the \emph{dilation factor} of $\phi$ along $e'$. The behavior of a harmonic morphism at a vertex $v'\in \Ga'$ is controlled by another phenomenon that we call \emph{dilation at vertices}, which assigns a dilation factor to each vertex as well (see Section~\ref{sec:definitions} below). We also note that dilation should not be confused with the distinct phenomenon of \emph{ramification} for morphisms of weighted graphs, which we discuss at the end of Section~\ref{sec:definitions}.

Dilation phenomena are inherent properties of morphisms of metric graphs, and arise naturally in tropicalization constructions. For this reason, the fundamental group of a metric graph (specifically, its underlying topological space) cannot be used to classify its harmonic covers, and this classification problem is, to the best of our knowledge, currently open. 

\subsection*{Classification of abelian tropical covers}

Our first goal in this article is to classify \emph{abelian harmonic covers} of a fixed metric graph $\Ga$. Given a finite group $\frakG$, a \emph{harmonic $\frakG$-cover} of $\Ga$ is a harmonic morphism $\phi:\Ga'\to \Ga$ together with a fiberwise $\frakG$-action, such that the dilation factor of $\phi$ at a point $p'\in \Ga'$ is equal to the order of its stabilizer group. If $\frakG=\frakA$ is abelian, then $\phi$ admits a convenient cohomological description. Namely, for any $p\in \Ga$ the stabilizer groups of two points of $\phi^{-1}(p)$ are equal, hence the cover determines a family of subgroups $D(p)\subseteq \frakA$ indexed by $p\in \Ga$, an object which we call the $\frakA$-\emph{dilation datum} of the harmonic cover. Choosing a graph model for $\Gamma$, the $\frakA$-dilation datum $D$ determines (by taking quotients) a sheaf of abelian groups $\frakA_D$ on $\Ga$ that we call the \emph{codilation sheaf}. 

\begin{maintheorem}[Theorem \ref{prop_torsor=harmonicGcover}]\label{mainthm_classification}
Let $\Gamma$ be a metric graph or tropical curve, let $\frakA$ be a finite abelian group, and let $D$ be an $\frakA$-dilation datum on $\Gamma$. There is a natural bijection between the sheaf cohomology group $H^1(\Gamma,\frakA_D)$ and the set of harmonic $\frakA$-covers with $\frakA$-dilation datum $D$. 
\end{maintheorem}

We refer to $H^1(\Gamma,\frakA_D)$ as the \emph{dilated cohomology group} of $\Ga$ with respect to the $\frakA$-dilation datum $D$. One may consider Theorem~\ref{mainthm_classification} as a first step towards a tropical analogue of geometric class field theory. 


\subsection*{From algebraic to tropical covers (and back again)} There is a natural tropicalization procedure that associates to a finite cover $F\colon X'\rightarrow X$ of smooth projective algebraic curves over a non-Archimedean field a harmonic morphism $\phi\colon \Gamma_{X'}\rightarrow\Gamma_X$ between the dual tropical curves. In the literature one may find at least two ways to describe this process: one by restricting the associated map $F^{\an}\colon X'^{\an}\rightarrow X^{\an}$ of Berkovich analytic spaces to the non-Archimedean skeletons, as in \cite{ABBRI,ABBRII}, the other from a moduli-theoretic point of view, as in \cite{CavalieriMarkwigRanganathan_tropadmissiblecovers}, using the moduli space of admissible covers. In Section \ref{sec:tropicalization} below we recall the latter approach, paying extra attention to the role of a finite automorphism group $\frakG$. In particular, we describe how to associate to a $\frakG$-cover $F\colon X'\rightarrow X$ of algebraic curves a harmonic $\frakG$-cover $\phi\colon\Gamma_{X'}\rightarrow\Gamma_X$ of tropical curves. 

Describing finite harmonic covers that arise as tropicalizations of finite algebraic covers is a highly non-trivial task, known as the \emph{realizability problem}. 
We refer the reader to \cite{Caporaso_gonality} and \cite{CavalieriMarkwigRanganathan_tropadmissiblecovers} for details, including the connection to the still-open \emph{Hurwitz existence problem} from the classical topology of Riemann surfaces (see \cite{PervovaPetronio_HurwitzexistenceI} for a survey). In the abelian case, however, this problem admits a convenient homological solution, which we describe in Section \ref{section_realizability}. Given a tropical curve $\Ga$ and a finite abelian group $\frakA$, we introduce the \emph{extended homology group} $H_1^{\ext}(\Ga,\frakA)$ whose elements encode local monodromy data of harmonic $\frakA$-covers of $\Ga$. In particular, a class $\eta \in H_1^{\ext}(\Gamma,\frakA)$ determines an associated $\frakA$-dilation datum $D_\eta$, and the realizable covers are exactly the ones that have such $\frakA$-dilation data:

\begin{maintheorem}[Theorem~\ref{thm:main}]\label{mainthm_realizability}
A harmonic $\frakA$-cover $\Ga'\to \Ga$ of tropical curves is realizable over a non-Archimedean field of residue characteristic zero or coprime to $\vert \frakA\vert$ if and only if its $\frakA$-dilation datum is associated to a class in the extended homology group $H_1^{\ext}(\Gamma,\frakA)$.
\end{maintheorem}

In Section~\ref{sec:nowherezeroflowproblem}, we specialize to the case of cyclic covers of prime order. It turns out that our realizability criterion is closely related to the so-called nowhere-zero flow problem from graph theory. In particular, Tutte's 5-flow conjecture has an equivalent formulation in terms of the existence of everywhere-dilated $\Z/5\Z$-covers.

We briefly mention how our results may generalize to the case of a non-abelian group $\frakG$. A harmonic $\frakG$-cover $\Ga'\to \Ga$ determines the structure of a \emph{graph of groups} on a model of $\Ga$, and Bass--Serre theory classifies such covers in terms of an appropriately generalized fundamental group~\cite{Serre_trees, Bass}. However, there is no convenient generalization of the homological realizability criterion, and the difficulties stemming from the Hurwitz existence problem cannot be avoided.

\subsection*{Earlier and related works} Graphs and tropical curves with group actions have been studied by a number of authors. The simplest example is the case of tropical hyperelliptic curves, which are $\ZZ/2\ZZ$-covers of a tree (see \cite{2009BakerNorine}, \cite{2013Chan}, \cite{Caporaso_gonality}, \cite{ABBRII}, \cite{2016Panizzut}, \cite{2017BologneseBrandtChua}, \cite{2017Len}). Expanding on this, Brandt and Helminck \cite{2017BrandtHelminck} consider arbitrary cyclic covers of a tree. Helminck \cite{2017Helminck} looks at the tropicalization of arbitrary abelian covers of algebraic curves from a non-Archimedean perspective, as in \cite{ABBRI, ABBRII}. Our Section \ref{sec:tropicalization} provides a moduli-theoretic approach to the same topic (with possibly non-abelian group) in the spirit of \cite{CavalieriMarkwigRanganathan_tropadmissiblecovers}.

In a different direction, Jensen and Len \cite{2018JensenLen} consider $\ZZ/2\ZZ$-covers of arbitrary tropical curves, and define the tropical Prym variety associated to such a cover. This object is equipped with a canonical polyhedral decomposition, leading to a combinatorial formula for its volume~\cite{2022LenZakharov,2023GhoshZakharov}. A tropical version of Donagi's $n$-gonal construction is investigated in~\cite{2022RoehrleZakharov}. Applications to algebraic Prym--Brill--Noether theory are studied in~\cite{LenUlirsch} and~\cite{creech2022prym}. See \cite{2022Len} for a  survey on tropical Prym varieties. In a similar vein, Song \cite{Song_Ginvariantlinearsystems} considers $\frakG$-invariant linear systems with the goal of studying their descent properties to the quotient. 


In \cite{Helminck_skeletalfiltrations} Helminck studies the fundamental group of a metrized curve complex in the sense of Amini and Baker \cite{AminiBaker} (which are also crucially used in \cite{ABBRI, ABBRII}). In his framework he proves a result that amounts to identifying the fundamental group of a metrized curve complex with the \'etale fundamental group of the generic fiber of its smoothening.  Theorem \ref{mainthm_realizability} could have been proved using this framework, but we decided to use the moduli-theoretic approach of \cite{CavalieriMarkwigRanganathan_tropadmissiblecovers} via $\frakG$-admissible covers in the sense of \cite{AbramovichCortiVistoli}.

Helminck's result provides a new perspective on an older result of Sa\"{\i}di \cite{Saidi_graphofgroups}, which identifies the \'etale fundamental group of the generic fiber with the profinite completion of the fundamental group of a suitable graph of groups (in the sense of Bass and Serre \cite{Serre_trees, Bass}) that encodes the fundamental group of a metrized curve complex. From a moduli-theoretic perspective, a similar observation seems to be inherent in both \cite{BertinRomagny} and \cite{Ekedahl_Hurwitzboundary}.

From a moduli-theoretic perspective, studying degenerations of $\frakG$-covers of algebraic curves is equivalent to studying the compactification of the moduli space of $\frakG$-covers in terms of the moduli space of $\frakG$-admissible covers, as constructed in \cite{AbramovichCortiVistoli} and \cite{BertinRomagny}. In \cite[Section 7]{BertinRomagny} the authors have already introduced a graph-theoretic gadget to understand the boundary strata of this moduli space: so-called \emph{modular graphs} with an action of a finite (not necessarily abelian) group $\frakG$.  

This idea seems to have appeared independently in other works as well: Chiodo and Farkas \cite{ChiodoFarkas} study the boundary of the moduli space of level curves, which is equivalent to a component of the moduli space of $\frakG$-admissible covers for a cyclic group $\frakG$, and look at cyclic covers of an arbitrary graph. Their work has been extended to an arbitrary finite group $\frakG$ by Galeotti in \cite{2019GaleottiB, 2019GaleottiA}. Finally, in \cite{SchmittvanZelm}, Schmitt and van Zelm apply a graph-theoretic approach to the boundary of the moduli space of $\frakG$-admissible covers (for an arbitrary finite group $\frakG$) to study their pushforward classes in the tautological ring of $\overline{\mathcal{M}}_{g,n}$.

In \cite{CavalieriMarkwigRanganathan_tropadmissiblecovers} Cavalieri, Markwig, and Ranganathan develop a moduli-theoretic approach to the tropicalization of the moduli space of admissible covers (without a fixed group operation). 
 In \cite{CaporasoMeloPacini}, Caporaso, Melo, and Pacini study the tropicalization of the moduli space of spin curves, which, in view of the results in \cite{2018JensenLen}, is closely related to our story in the case $\frakG=\Z/2\Z$. 

The problem of classifying covers of a graph with an action of a given group (not necessarily abelian) was studied by Corry in \cite{2011Corry,2012Corry,2015Corry}. However, Corry considered a different category of graph morphisms, allowing edge contraction but not dilation. To the best of our knowledge, no author has considered the problem of classifying all covers of a given graph with an action of a fixed group.

\subsection*{Acknowledgements}
The authors would like to thank Matthew Baker, Madeline Brandt, Renzo Cavalieri, Gavril Farkas, Paul Helminck, David Jensen, Andrew Obus, Sam Payne, Dhruv Ranganathan, Felix R\"ohrle, Matthew Satriano, Johannes Schmitt, Pedro Souza, and Jason van Zelm for useful discussions.  In addition, the authors thank the anonymous referee(s) for their helpful comments and suggestions. 

This project has received funding from the European Union's Horizon 2020 research and innovation programme  under the Marie-Sk\l odowska-Curie Grant Agreement No. 793039. \includegraphics[height=1.7ex]{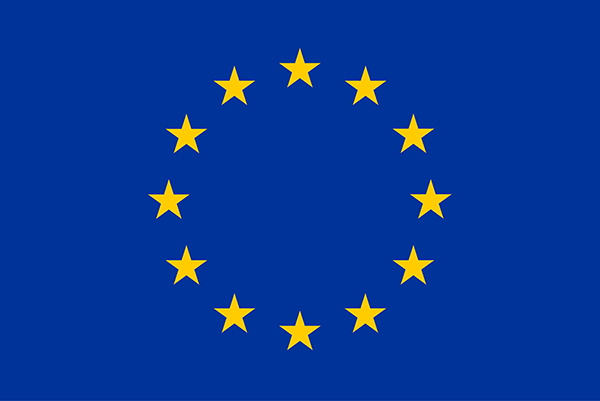}
M.U.\ has received funding from the Deutsche Forschungsgemeinschaft  (DFG, German Research Foundation) TRR 326 \emph{Geometry and Arithmetic of Uniformized Structures}, project number 444845124, by the Deutsche Forschungsgemeinschaft (DFG, German Research Foundation) Sachbeihilfe \emph{From Riemann surfaces to tropical curves (and back again)}, project number 456557832, and from the LOEWE grant \emph{Uniformized Structures in Algebra and Geometry}.  Y.L. has received funding from the EPSRC New Investigator Award (grant number EP/X002004/1).



\section{Harmonic covers of metric graphs and tropical curves}

\label{sec:definitions}

In this section, we recall a number of standard definitions concerning graphs, tropical curves, harmonic morphisms, and group actions on graphs.

\subsection{Finite graphs and harmonic morphisms} We use a modified version of Serre's definition of a graph~\cite{Serre_trees} that allows for legs, which are a type of extremal edge with no end vertex. 

\begin{definition} A \emph{graph with legs} $G$, or simply a \emph{graph}, consists of the following data:
\begin{enumerate}
\item A finite set $X(G)$.

\item An idempotent \emph{root map} $r:X(G)\to X(G)$.

\item An involution $\iota:X(G)\to X(G)$ whose fixed set contains the image of $r$.

\end{enumerate}

\end{definition}

The set $X(G)$ is the union of the \emph{vertices} $V(G)$ and \emph{half-edges} $H(G)$ of the graph $G$, where $V(G)$ is the image of $r$ and $H(G)=X(G)\backslash V(G)$ is the complement. The involution $\iota$ preserves $H(G)$ and partitions it into orbits of sizes 1 and 2; we call these respectively the \emph{legs} and \emph{edges} of $G$ and denote the corresponding sets by $L(G)$ and $E(G)$. The root map assigns one root vertex to each leg and two root vertices to each edge (each vertex is rooted at itself). A \emph{loop} is an edge whose root vertices coincide. An \emph{orientation} on $G$ is a choice of order $(h,h')$ on each edge $e=\{h,h'\}$ of $G$ and defines \emph{source} and \emph{target} maps $s,t:E(G)\to V(G)$ by $s(e)=r(h)$ and $t(e)=r(h')$. We note that a leg does not have a vertex at its free end and is thus distinct from an extremal edge, and that legs do not require orienting.

Graphs with legs naturally appear in tropical moduli problems, where a leg represents  the tropicalization of a marked point. An extremal edge, on the other hand, represents an irreducible component attached to the rest of the curve at a single node.

The \emph{tangent space} $T_v G$ and \emph{valency} $\val (v)$ of a vertex $v\in V(G)$ are defined by 
\[
T_vG=\big\{h\in H(G):r(h)=v\big\} \quad\textrm{ and } \quad\val(v)=|T_v G|,
\]
so that a leg is counted once for valency, while a loop is counted twice.

A \emph{morphism} of graphs $f:G'\to G$, is a set map $f:X(G')\to X(G)$ that commutes with the root and involution maps and that sends vertices to vertices, edges to edges, and legs to legs. By abuse of notation, we denote by $f$ the corresponding maps on the vertices, half-edges, edges, and legs. We note that our graph morphisms are \emph{finite} and do not allow edges or legs to contract to vertices. Non-finite morphisms are relevant to tropical geometry, but do not occur as quotients by finite group actions;  so we do not consider them.

Let $G$ and $G'$ be graphs. A \emph{harmonic morphism} $(f,d_f)$ consists of a graph morphism $f:G'\to G$ and a \emph{degree} assignment $d_f:X(G')\to \ZZ_{>0}$ such that $d_f(h'_1)=d_f(h'_2)$ for each edge $e'=\{h'_1,h'_2\}\in E(G')$ (a quantity that we denote by $d_f(e')$), and such that 
\begin{equation}\label{eq:localDegree}
d_{f}(v')=\sum_{h'\in T_{v'} G'\cap f^{-1}(h)}d_{f}(h')
\end{equation}
for every $v'\in V(G')$ and every $h\in T_{f(v)}G$. In particular, the quantity appearing on the right hand side of \eqref{eq:localDegree} does not depend on the choice of $h\in T_{f(v)}G$. 
The degree $d_f$ is also called the \emph{dilation factor} of $f$. If $G$ is connected, then the \emph{global degree} of $f$ is defined as
\[
\deg(f)=\sum_{v'\in f^{-1}(v)}d_f(v')=\sum_{e'\in f^{-1}(e)}d_f(e')=\sum_{l'\in f^{-1}(l)}d_f(l')
\]
for any choice of $v\in V(G)$, $e\in E(G)$ or $l\in L(G)$. 
A harmonic morphism $(f,d_f)$ is called \emph{free} if $d_f(x)=1$ for all $x\in X(G)$; a free harmonic morphism is a covering space in the topological sense.

\subsection{Group quotients and harmonic Galois covers} 
An \emph{automorphism}  of a graph $G$ is a morphism $f:G\to G$ that has an inverse; such a morphism can be made harmonic by setting $d_f=1$ everywhere. A priori, a non-trivial automorphism may \emph{flip edges}, in other words exchange the two half-edges making up an edge. Such  automorphisms do not give rise to a quotient, however, since we do not allow an edge to map to a leg. Hence we exclude them from consideration.

\begin{definition} Let $G$ be a graph and  $\frakG$  a finite group. A \emph{$\frakG$-action} on $G$ is a homomorphism from $\frakG$ to the automorphism group $\Aut (G)$, such that, for every $g\in \frakG$ and every $e=\{h,h'\}\in E(G)$, we have $g(h)\neq h'$ (so that either $g(h)=h$ and $g(h')=h'$, or $g(e)\neq e$). 
    
\end{definition}

Given a $\frakG$-action on a graph $G$, we can naturally form the quotient graph $G/\frakG$ in such a way that the quotient map $f:G\to G/\frakG$ is harmonic of degree $|\frakG|$.

\begin{definition} Let $G$ be a graph and let $\frakG$ be a finite group. Given a $\frakG$-action on $G$, we define the \emph{quotient graph} $G/\frakG$ by setting $X(G/\frakG)=X(G)/\frakG$. The root and involution maps on $G$ are $\frakG$-invariant and descend to $X(G/\frakG)$. It is clear that $V(G/\frakG)=V(G)/\frakG$ and $H(G/\frakG)=H(G)/\frakG$, and by assumption the $\frakG$-action does not identify the two half-edges of any edge of $G$. Therefore $E(G/\frakG)=E(G)/\frakG$ and $L(G/\frakG)=L(G)/\frakG$, and the quotient map 
\[
f:G\longrightarrow G/\frakG
\]
is a finite morphism. By the orbit-stabilizer theorem, we can promote $f$ to a harmonic morphism of global degree $\deg(f)=|\frakG|$ by setting $d_f(x)=|\frakG_x|$, where $\frakG_x$ is the stabilizer subgroup of $x\in X(G)$.

\label{def:Gquotient}

\end{definition}

We now define a harmonic Galois cover of a graph to be any harmonic morphism obtained in this way.

\begin{definition} Let $G$ be a graph and let $\frakG$ be a finite group of order $d$. A \emph{harmonic $\frakG$-cover} of $G$ is a harmonic morphism $f\colon G'\rightarrow G$ of degree $d$ together with a $\frakG$-action on $G'$ such that following axioms hold:
\begin{enumerate}[(i)]
\item The harmonic morphism $f$ is $\frakG$-invariant, in other words $f(g(x'))=f(x')$ and $d_f(g(x'))=d_f(x')$ for all $x'\in X(G')$ and all $g\in \frakG$.
\item For all $x\in X(G)$, the group $\frakG$ acts transitively on the fiber $f^{-1}(x)$.
\end{enumerate}
\end{definition}

Let $f:G'\to G$ be a harmonic $\frakG$-cover, and pick a vertex or half-edge $x\in X(G)$. The group $\frakG$ acts transitively on the fiber $f^{-1}(x)$, so we can identify the latter with $\frakG/\frakG_{x'}$, where $\frakG_{x'}$ is the stabilizer of some $x'\in f^{-1}(x)$. On the other hand, for any $x',x''\in f^{-1}(x)$ we have $|\frakG_{x'}|=|\frakG_{x''}|$ and $d_f(x')=d_f(x'')$. Since the degrees of $f$ on the fiber $f^{-1}(x)$ add up to $\deg f=|\frakG|$, it follows that $d_f(x')=|\frakG_{x'}|$ for any $x'\in X(G')$. It follows that $f$ is the quotient morphism $G'\to G'/\frakG$.

\subsection{Metric graphs}

Let $G$ be a graph and let $\ell:E(G)\to \RR_{>0}$ be an assignment of positive real lengths to the edges of $G$. The pair $(G,\ell)$, known as a \emph{model} for $G$, determines a \emph{metric graph} $\Ga$ by gluing a closed line segment $[0,\ell(e)]$ for each edge $e\in E(G)$ and an open infinite interval $[0,\infty)$ for each leg $l\in L(G)$ in accordance with the structure of $G$. We equip $\Ga$ with the shortest-path metric. We note that the set of legs does not depend on the choice of model and that the metric graph $\Ga$ is compact if and only if $G$ has no legs. 

A model $(G,\ell)$ of a metric graph $\Ga$ is called \emph{simple} if $G$ has no loops or multi-edges. Given a simple model $(G,\ell)$ for $\Ga$, we define the \emph{star cover}
\[
\calU(G)=\{U_v\}_{v\in V(G)}\cup \{U_l\}_{l\in L(G)}
\]
of $\Ga$ as follows. For each leg $l\in L(G)$, let $U_l\subset \Ga$ be the interior of the corresponding infinite segment in $\Gamma$.  For each vertex $v\in V(G)$, let $U_v\subset \Ga$ be the union of $v$ and the interiors of all legs and edges incident to $v$. The distinct $U_l$ have empty intersections, and $U_l\cap U_v=U_l$ if $l$ is rooted at $v$ and is empty otherwise. Finally, for distinct vertices $v$ and $w$, the intersection $U_v\cap U_w$ is either the open edge connecting $v$ and $w$, if there is such an edge, or is empty otherwise. Hence each element of $\calU(G)$ is contractible, pairwise intersections are open intervals or empty, and all triple intersections are empty, making the star cover convenient for cohomological calculations.




We now define harmonic morphisms and Galois covers of metric graphs. Let $\Ga$ and $\Ga'$ be metric graphs with models $G$ and $G'$, respectively. Let $f:G'\to G$ be a harmonic morphism of graphs satisfying the condition
\begin{equation}
    \label{eq:lengths}
\ell\big(f(e')\big)=d_f(e')\ell(e')
\end{equation}
for all $e'\in E(G')$. We define an associated continuous map $\phi:\Ga'\to \Ga$ of metric graphs by mapping vertices to vertices, edges to edges, and legs to legs according to $f$. Along each edge and leg of $\Ga'$, the map $\phi$ is linear with positive integer slope, or \emph{dilation factor}, given by the degree $d_f$ (which we also denote $d_{\phi}$). Condition~\eqref{eq:lengths} ensures that $\phi$ is continuous, and no condition is required along the infinite legs.

A \emph{harmonic morphism of metric graphs} $\phi:\Ga'\to \Ga$ is any continuous, piecewise-linear map obtained in this manner, with nonzero integer slopes given by the degree function $d_f$ of a harmonic morphism of graphs $f:G'\to G$ (and thus satisfying the balancing condition~\eqref{eq:localDegree} at each vertex of $\Ga'$). This definition is equivalent to requiring that $\phi$ pulls back harmonic functions on $\Ga$ to harmonic functions on $\Ga'$. We refer to the datum $(G,G',f:G'\to G,d_f)$ as a \emph{model} for $\phi$. We say that $\phi$ is \emph{free} if $f$ is free, or equivalently, if $\phi$ is a covering isometry.

We similarly define harmonic Galois covers of metric graphs. 
 
\begin{definition} Let $\Ga$ be a metric graph and let $\frakG$ be a finite group of order $d$. A \emph{harmonic $\frakG$-cover} of $\Ga$ is a harmonic morphism $\phi\colon \Gamma'\rightarrow \Gamma$ of degree $d$
together with an operation of $\frakG$ on $\Gamma'$ by invertible isometries such that following properties hold:
\begin{enumerate}[(i)]
\item The harmonic cover $\phi$ is $\frakG$-invariant, i.e.~$\phi(g(p'))=\phi(p')$ for all $p'\in \Ga'$ and all $g\in \frakG$.
\item For all $p\in\Gamma$, the group $\frakG$ operates transitively on the fiber $\phi^{-1}(p)$.
\end{enumerate}
\end{definition}

It is clear that a harmonic $\frakG$-cover $\phi:\Ga'\to\Ga$ of metric graphs admits a model $f:G'\to G$ that is a harmonic $\frakG$-cover of finite graphs (the models $G'$ and $G$ need to be sufficiently fine to avoid edge-flipping). For any $p'\in \Ga'$, the degree $d_\phi(p')$ is equal to the order of the stabilizer group $\frakG_{p'}$.



\subsection{Weighted graphs, tropical curves and ramification} \label{subsec:ramification} Graphs and metric graphs that arise as tropicalizations of algebraic curves come equipped with an additional vertex weight function that records local genera. These weights allow us to capture the auxiliary phenomenon of \emph{ramification} for harmonic morphisms. We recall the definitions.

A \emph{weighted graph} is a pair $(G,g)$, where $G$ is a finite graph and $g:V(G)\to \ZZ_{\geq 0}$ is a function, where $g(v)$ is called the \emph{genus} of the vertex $v$. Similarly, a \emph{tropical curve} $(\Ga,g)$ is a metric graph $\Ga$ together with a function $g:\Ga\to \ZZ_{\geq 0}$ with finite support. When choosing a model $(G,\ell)$ for a tropical curve $(\Ga,g)$, we assume that each point $x\in \Ga$ with $g(x)>0$ corresponds to a vertex, and not to an interior point of an edge or a leg, so that $(G,g)$ is a weighted graph. A \emph{harmonic morphism} of tropical curves is a harmonic map of the underlying metric graphs. 

Let $(G,g)$ be a weighted graph. We define the \emph{Euler characteristic} $\chi(v)$ of a vertex $v\in V(G)$ as
\[
\chi(v)=2-2g(v)-\val(v). 
\]
Now let $f:G'\to G$ be a harmonic morphism of weighted graphs $(G',g')$ and $(G,g)$. We define the \emph{ramification degree} of $f$ at a vertex $v\in V(G')$ to be the quantity
\begin{equation}
\Ram_f(v')=d_f(v')\chi(f(v'))-\chi(v').
\label{eq:ramification}
\end{equation}
We say that $f$ is \emph{unramified} if it satisfies the \emph{local Riemann--Hurwitz condition} $\Ram_f(v')=0$ for all $v'\in V(G')$, where we note that, in contrast to the algebraic setting, it is possible for the ramification degree at a vertex to be negative. A harmonic morphism $\phi:\Ga'\to \Ga$ of tropical curves is \emph{unramified} if it has an unramified model. Our definition of ramification was introduced in~\cite{ulirsch2019tropical}, and is equivalent to the standard definition found in~\cite{ABBRI} or~\cite{CavalieriMarkwigRanganathan_tropadmissiblecovers}.




\section{Dilated cohomology and finite harmonic abelian covers}

In this section, we give a cohomological classification of harmonic covers of a given metric graph with abelian structure group. For the remainder of this section, we fix a finite abelian group $\frakA$.

Let $\phi\colon \Gamma'\rightarrow \Gamma$ be a harmonic $\frakA$-cover. For any point $p\in \Ga$, the stabilizer subgroups of any two points in the fiber $\phi^{-1}(p)$ are conjugate and hence equal. Therefore this group depends only on $p$, and we denote it by $D(p)\subseteq \frakA$ and call it the \emph{dilation group} of $p$. Similarly, choosing a finite graph model $f:G'\to G$ of $\phi$, we denote by $D(x)\subseteq \frakA$ the stabilizer of any element of $f^{-1}(x)$. The groups $D(x)$ fulfil the semicontinuity property $D(h)\subseteq D(v)$ for any half-edge $h\in H(G)$ rooted at a vertex $v\in V(G)$. Furthermore, for any edge $e=\{h,h'\}\in E(G)$ we have $D(h)=D(h')$, and we denote this group by $D(e)$. 



This motivates the following definition.

\begin{definition}
An \emph{$\frakA$-dilation datum} $D$ on a finite graph $G$ is a choice of a subgroup $D(v)\subseteq \frakA$ for every $v\in V(G)$ and $D(h)\subseteq \frakA$ for every $h\in H(G)$, such that $D(h)\subset D(v)$ if $h$ is rooted at $v$ and such that $D(h)=D(h')=D(e)$ for any edge $e=\{h,h'\}\in E(G)$. 
We note that if $e\in E(G)$ is an edge with root vertices $u,v\in V(G)$, then $D(e)\subseteq D(u)\cap D(v)$. An \emph{$\frakA$-dilation datum} $D$ on a metric graph $\Ga$ is an $\frakA$-dilation datum on some model $G$ of $\Ga$, which defines a subgroup $D(p)\in \frakA$ for each $p\in \Ga$.
\end{definition}



An $\frakA$-dilation datum on a metric graph $\Ga$ together with a choice of simple model naturally gives rise to a dual sheaf of abelian groups.  %

\begin{definition} Let $D$ be an $\frakA$-dilation datum on a simple model $G$ of a metric graph $\Ga$. We define the \emph{codilation sheaf} $\frakA_D$ on $\Ga$ as follows. For a vertex $v\in V(G)$, we denote $C(v)=D(v)$. Similarly, for a leg $l\in L(G)$ we denote $C(l)=D(v)$, where $v=r(l)$. Finally, for an edge $e\in E(G)$ with root vertices $v$ and $w$, we denote $C(e)=D(v)+D(w)\subseteq \frakA$. We note that $D(e)\subseteq C(e)$ for any edge $e\in E(G)$ and $D(l)\subseteq C(l)$ for any leg $l\in L(G)$. Now let $\calU(G)=\{U_v,U_l\}$ be the star cover of $\Ga$ associated to $G$.  The sections of $\frakA_D$ over the open cover and the induced intersections are
\begin{equation*}
\frakA_D(U_v)= A/C(v),\quad
\frakA_D(U_l)= A/C(l),\quad \textrm{and }\quad
\frakA_D(U_e)= A/C(e), 
\end{equation*}
where $U_e=U_v\cap U_w$ if $e$ is  the (unique) edge between $v$ and $w$. The restriction maps are induced by the inclusions $D(v)=C(v)\subseteq C(e)$ and $D(v)=C(v)=C(l)$ for an edge $e$ or a leg $l$ rooted at a vertex $v$. Given a connected open set $U\subseteq \Ga$, we set $\frakA_D(U)=\frakA_D(U_v)$ if $v\in U\subseteq U_v$ for some vertex $v$, while  $\frakA_D(U)=\frakA_D(U_e)$ and $\frakA_D(U)=\frakA_D(U_l)$ respectively
if $U\subseteq U_e$ or $U\subseteq U_l$. For larger open sets, we define the space of sections via the sheaf axioms. 

The \emph{dilated cohomology group} of the pair $(\Ga,D)$ is the sheaf cohomology group $H^1(\Ga,\frakA_D)$. We note that the sheaf $\frakA_D$ depends on the choice of model (see Example~\ref{ex:codilation} below), but the group $H^1(\Ga,\frakA_D)$ does not.

\end{definition}

We now show that harmonic $\frakA$-covers of $\Ga$ are in natural bijection with $\frakA_D$-torsors. We first recall the definition of torsors over a sheaf of abelian groups, and their description in terms of \v{C}ech cocycles. Let $\calF$ be a sheaf of abelian groups on a topological space $X$. We may view $\calF$ as a sheaf of $\calF$-sets, with each group acting on itself by translation. An \emph{$\calF$-torsor} $\calT$ on $X$ is a locally trivial sheaf of $\calF$-sets, in other words a sheaf of $\calF$-sets such that $X$ admits a cover by open sets $U$ with the property that $\calT|_U$ and $\calF|_U$ are isomorphic as sheaves of $\calF$-sets. 


It is well-known that the set of isomorphism classes of $\calF$-torsors on $X$ is the sheaf cohomology group $H^1(X,\calF)$. We explicitly calculate this group for a codilation sheaf $\frakA_D$ on a metric graph $\Ga$ as a \v{C}ech cohomology group. Choose an oriented simple model $G$ for $\Ga$, then the star cover $\calU(G)=\{U_v,U_l\}$ is acyclic for $\frakA_D$. Let $\calT$ be an $\frakA_D$-torsor, then we can find trivializations $g_v:\calT|_{U_v}\to \frakA_D|_{U_v}$. Each edge $e\in E(G)$ corresponds to a nonempty intersection $U_e=U_{s(e)}\cap U_{t(e)}$, and the composed isomorphism $g^e=g_{t(e)}|_{U_e}\circ (g_{s(e)}|_{U_e})^{-1}:\frakA|_{U_e}\to \frakA|_{U_e}$ is given by translation by an element of $\frakA(U_e)=A/C(e)$, which we also denote by $g^e$. Hence the $\frakA_D$-torsor $\calT$ determines a tuple $(g^e)_{e\in E(G)}$, where $g^e\in A/C(e)$. Choosing different trivializations for $\calT$ over the sets $U_v$ determines a different tuple $(\tilde{g}^e)$, and composing the trivializations produces elements $g^v\in \frakA_D(U_v)=A/C(v)$ for $v\in V(G)$ such that $\tilde{g}^e-g^e=g^{t(e)}-g^{s(e)}$ in the common quotient group $\frakA/C(e)$. All triple intersections are empty, so the cocycle condition is trivially verified and the tuple $(g^e)$ determines an element of $\check{H}^1\big(\calU(G),\frakA_D\big)\cong H^1(\Ga,\frakA_D)$, and we can reverse the construction to obtain $\calT$ from $(g^e)$. 

We now state our main result, which shows that harmonic $\frakA$-covers with fixed $\frakA$-dilation datum $D$ are classified by the dilated cohomology group $H^1(\Ga,\frakA_D)$.


\begin{theorem}\label{prop_torsor=harmonicGcover} Let $\Ga$ be a metric graph and let $D$ be an $\frakA$-dilation datum on $\Ga$. There is a natural one-to-one correspondence between $\frakA_D$-torsors on $\Gamma$ and harmonic $\frakA$-covers of $\Gamma$ with associated $\frakA$-dilation datum $D$. 
\end{theorem}

\begin{proof} Choose an oriented simple model $G$ for $\Ga$ such that $D$ is defined over $G$. Let $\phi\colon \Gamma'\rightarrow \Gamma$ be a harmonic $\frakA$-cover with $\frakA$-dilation datum $D$ and let $f:G'\to G$ be a model for $\phi$. For any vertex $v\in V(G)$, the fiber $f^{-1}(v)$ is naturally a torsor over $\frakA_D(U_v)=\frakA/D(v)$. The fiber $f^{-1}(e)$ over an edge $e$, however, is a torsor over $\frakA/D(e)$, not over $\frakA_D(U_e)=\frakA/C(e)$. The latter group is a quotient of the former, and we replace $f^{-1}(e)$ by its quotient by $C(e)/D(e)$. Similarly, for each leg $l\in L(G)$ we take the quotient of $f^{-1}(l)$ by $C(l)/D(l)$. In this way, we obtain an $\frakA_D$-torsor on $\Ga$. We observe that, generally speaking, the espace \'etal\'e of this torsor is not Hausdorff, since if $D(v)\subsetneq C(e)$ then the vertex $v$ has more preimages than the adjacent edge $e$.


Conversely, let $\calT$ be an $\frakA_D$-torsor over $\Ga$. We construct a harmonic $\frakA$-cover $f:G'\to G$ by resolving the espace \'etal\'e of $\calT$ in a canonical way. Let $(g^e)\in \check{H}^1(\calU(G),\frakA_D)$ be a \v{C}ech cocycle representing $\calT$. We arbitrarily lift each $g^e\in A/C(e)$ to an element $\widetilde{g}^e\in\frakA/D(e)$. For each vertex $v\in V(G)$, the fiber $f^{-1}(v)$ is equal to $\frakA/D(v)$ as an $\frakA$-set. For each edge $e\in V(G)$ with source and target vertices $v=s(e)$ and $w=t(e)$, the fiber $f^{-1}(e)$ is $\frakA/D(e)$. The gluing map $f^{-1}(e)\to f^{-1}(v)$ is the natural quotient map $\frakA/D(e)\to \frakA/D(v)$, while the gluing map $f^{-1}(e)\to f^{-1}(w)$ is translation by $\widetilde{g}^e$ followed by taking the quotient. Finally, for each leg $l\in L(G)$ with root vertex $v$, we set $f^{-1}(l)=A/D(l)$, and the root map $f^{-1}(l)\to f^{-1}(v)$ is the quotient map $\frakA/D(l)\to \frakA/D(v)$.

One may now verify that these constructions are inverses of each other, thereby completing the proof.

\end{proof}

\begin{example} \label{ex:codilation}
In the following picture, on the left, we illustrate a harmonic $\Z/4\Z$-cover, for which the $\Z/4\Z$-dilation datum is given by $D(u)=\Z/2\Z$, $D(v)=\Z/4\Z$, and $D(e)=0$. In this case we have $C(e)=\Z/4\Z$ and thus $(\frakA_D)_u=\Z/2\Z$, $(\frakA_D)_v=0$ , and $(\frakA_D)_e=0$. The (non-Hausdorff) espace \'etal\'e of the associated $\calA_D$-torsor is illustrated on the right.  

\begin{center}\begin{tikzpicture}
\path[draw] (0,0) -- (2,0);
\fill[black] (0,0) circle (0.1);
\fill[black] (2,0) circle (0.1);

\fill[black] (0,1) circle (0.1);
\fill[black] (0,3) circle (0.1);
\fill[black] (2,2) circle (0.1);

\draw (0, 1) .. controls (1.25,1.25) .. (2, 2);
\draw (0, 1) .. controls (0.75,1.75) .. (2, 2);
\draw (0, 3) .. controls (1.25,2.75) .. (2, 2);
\draw (0, 3) .. controls (0.75,2.25) .. (2, 2);

\draw[->] (1,1) -- (1,0.25);

\node (u) at (-0.25,-0.25) {$u$};
\node (v) at (2.25,-0.25) {$v$};
\node (e) at (1,-0.25) {$e$};
\node (v') at (3.3,2) {$D(v)=\ZZ/4\ZZ$};
\node (u') at (-1.3,1) {$D(u)=\ZZ/2\ZZ$};
\node (u'') at (-1.3,3) {$D(u)=\ZZ/2\ZZ$};
\node (e') at (1.25,3.25) {$D(e)=0$};

\path[draw] (7,0) -- (9,0);
\fill[black] (7,0) circle (0.1);
\fill[black] (9,0) circle (0.1);

\fill[black] (7,1) circle (0.1);
\fill[black] (7,3) circle (0.1);
\draw (7,2) circle (0.1);
\fill[black] (9,2) circle (0.1);
\path[draw] (7.1,2) -- (9,2);

\draw[->] (8,1) -- (8,0.25);

\node (uu) at (6.75,-0.25) {$u$};
\node (vv) at (9.25,-0.25) {$v$};
\node (ee) at (8,-0.25) {$e$};
\node (vv') at (10,2) {$(\frakA_D)_v=0$};
\node (uu') at (5.45,1) {$(\frakA_D)_u=\ZZ/2\ZZ$};
\node (uu'') at (5.45,3) {$(\frakA_D)_u=\ZZ/2\ZZ$};
\node (ee') at (8,2.5) {$(\frakA_D)_e=0$};

\end{tikzpicture}
\end{center}
Consider now a subdivision of the base with extra vertex $w$. Then the espace \'etal\'e of the associated codilation sheaf is given as follows:
\begin{center}\begin{tikzpicture}
\path[draw] (0,-1) -- (4,-1);
\fill[black] (0,-1) circle (0.1);
\fill[black] (4,-1) circle (0.1);
\fill[black] (2,-1) circle (0.1);

\fill[black] (0,1) circle (0.1);
\fill[black] (0,3) circle (0.1);
\fill[black] (4,2) circle (0.1);
\draw (2,2) circle (0.1);
\draw[black] (2,3) circle (0.1);
\draw[black] (2,1) circle (0.1);
\fill[black] (0,1) circle (0.1);

\draw (0, 3) -- (1.9, 3);
\draw (0, 1) -- (1.9, 1);
\draw (2.1, 2) -- (4, 2);

\fill[black] (2,0.5) circle (0.1);
\fill[black] (2,1.5) circle (0.1);
\fill[black] (2,2.5) circle (0.1);
\fill[black] (2,3.5) circle (0.1);

\draw[->] (2,0.25) -- (2,-0.75);

\node (u) at (-0.25,-1.25) {$u$};
\node (v) at (4.25,-1.25) {$v$};
\node (w') at (3.5,3.5) {$(\frakA_D)_w=\Z/4\Z$};
\node (v') at (5.3,2) {$(\frakA_D)_v=0$};
\node (u'') at (-1.4,3) {$(\frakA_D)_u=\ZZ/2\ZZ$};
\node (w) at (2,-1.25) {$w$};

\end{tikzpicture}\end{center}

\end{example}



We point out that the dilated cohomology group $H^1(\Ga,\frakA_D)$ only depends on the dilation factors at vertices and not on the dilation factors along the edges. The interpretation of a class in $H^1(\Gamma,\frakA_D)$ in Proposition \ref{prop_torsor=harmonicGcover}, however, does depend on the dilation along edges. That is, different choices of dilation factors would lead to different edge lengths in the corresponding harmonic covers.

We now determine when a harmonic $\frakA$-cover $\phi:\Ga'\to \Ga$ of tropical curves is unramified. Let $f:G'\to G$ be a model of $\phi$, where $G'$ and $G$ are weighted graphs, and let $v'\in V(G')$ be a vertex lying over $v=f(v')$. The number of half-edges $h'\in T_{v'}(G')$ that are rooted at $v'$ and that lie over a given half-edge $h\in T_vG$ is equal to the order of the corresponding quotient $\big|D(v)\big/D(h)\big|$. A short calculation then shows that $\Ram_f(v')=0$ if and only if
\begin{equation}    
g(v')=1+\big|D(v)\big|\big(g(v)-1\big)+\frac{\big|D(v)\big|}{2}\sum_{h\in T_vG}\left[1-\frac{1}{|D(h)|}\right].
\label{eq:admissible}
\end{equation}
Since $g(v)$ and $g(v')$ are non-negative integers, this condition imposes certain restrictions on the $\frakA$-dilation datum of an unramified harmonic $\frakA$-cover. As an example, we consider the simplest case of a cyclic cover of prime order.

\begin{example} \label{ex:semistable} Let $\phi:\Ga'\to\Ga$ be an unramified harmonic $\frakA$-cover of tropical curves with Galois group $\frakA=\ZZ/p\ZZ$, where $p\geq 2$ is prime, and let $f:G'\to G$ be a model. For any element $x\in X(G)$ we have either $D(x)=\ZZ/p\ZZ$ or $D(x)=1$, and we say that $x$ is \emph{dilated} or \emph{undilated}, respectively. The set of dilated vertices and half-edges forms the \emph{dilation subgraph} $G_{\dil}\subseteq G$. 

Now let $v'\in V(G')$ be a vertex mapping to $v=f(v')$. If $v$ is undilated, Equation~\ref{eq:admissible} simply reads $g(v')=g(v)$. For a dilated vertex $v\in V(G_{\dil})$, let $d(v)=\big|\{h\in T_vG\mid D(h)=\ZZ/p\ZZ\}\big|$ be the valency of $v$ in $G_{\dil}$. Equation~\ref{eq:admissible} then imposes the following conditions on $g(v)$ and $d(v)$:

\begin{itemize}
    \item If $p=2$, then $d(v)\geq 2$ or $g(v)\geq 1$, and in addition $d(v)$ is even.
    \item If $p\geq 3$, then $d(v)\geq 2$ or $g(v)\geq 1$.
\end{itemize}

In other words, the dilation subgraph $G_{\dil}$ is \emph{semistable}, and additionally if $p=2$ then each vertex of $G_{\dil}$ has even valency (see Lemma 5.4 in~\cite{2018JensenLen}).

\end{example}

\section{Moduli of admissible $\frakG$-covers and their tropicalization} \label{sec:tropicalization}

Let $\mathfrak{\frakG}$ be a fixed finite group, which, in this section, does not need to be abelian. In the following, we explain how harmonic $\frakG$-covers of weighted graphs and tropical curves naturally arise as tropicalizations of algebraic $\frakG$-covers from a moduli-theoretic perspective, expanding on~\cite{ACP} and~\cite{CavalieriMarkwigRanganathan_tropadmissiblecovers} (recall that unramified harmonic morphisms of tropical curves are called \emph{tropical admissible covers} in~\cite{CavalieriMarkwigRanganathan_tropadmissiblecovers}). We always work over $\Spec \Z\big[\frac{1}{\vert \frakG\vert}\big]$ to avoid the wild world of non-tame covers. 

\subsection{Compactifying the moduli space of $\frakG$-covers}
Let $X\rightarrow S$ be a family of smooth projective curves of genus $g\geq 2$ with $n$ marked disjoint sections $s_1,\ldots, s_n\in X(S)$.  A $\frakG$-\emph{cover} of $X$ is a finite morphism $X'\rightarrow X$ together with an operation of $\frakG$ on $X'$ over $X$ that is a principal $\frakG$-bundle on the complement of the sections, as well as a  marking $s'_{ij}\in X'(S)$ of the disjoint preimages of the $s_i$, indexed by $i=1,\ldots, n$ and $j=1,\ldots, k_i$. Denote by $\calH_{g,\frakG}$ the moduli space of connected $\frakG$-covers of smooth curves of genus $g$ (see e.g. \cite{RomagnyWewers} for a construction). There is a good notion of a limit object as $X$ degenerates to a stable curve, as introduced in \cite{AbramovichCortiVistoli}. 

\begin{definition}\label{def_admissibleGcover} Let $\frakG$ be a finite group and let $X\rightarrow S$ be a family of stable curves of genus $g\geq 0$ with $n$ marked disjoint sections $s_1,\ldots, s_n$.  Let $\mu=(r_1,\ldots, r_n)$ be an $n$-tuple of natural numbers that divide $|\frakG|$, and denote $k_i=\vert\frakG\vert/r_i$ for $i=1,\ldots, n$. An \emph{admissible $\frakG$-cover} of $X$ consists of a finite morphism $X'\rightarrow X$ from a family of stable curves $X'\rightarrow S$, 
an action of $\frakG$ on $X'$, and disjoint sections $s'_{ij}$ of $X'$ over $S$ for $i=1,\ldots,n$ and $j=1,\ldots,k_i$, 
subject to the following conditions: 
\begin{enumerate}[(i)]
\item The morphism $X'\rightarrow X$ is a principal $\frakG$-bundle away from the nodes and sections of $X$.
\item The preimage of the set of nodes in $X$ is precisely the set of nodes of $X'$.
\item The preimage of a section $s_i$ is precisely given by the sections $s'_{i1},\ldots, s'_{ik_i}$.
\item Let $p$ be a node in $X$ and $p'$ a node of $X'$ above $p$. Then $p'$ is \'etale-locally given by $x'y'=t$ for a  suitable $t\in\calO_S$ and $p$ is \'etale-locally given by $xy=t^r$ for some integer $r\geq 1$ with $(x')^r=x$ and $(y')^r=y$, and the stabilizer of $\frakG$ at $p'$ is cyclic of order $r$ and operates via 
\begin{equation*}
(x',y')\longmapsto (\zeta x',\zeta^{-1} y')
\end{equation*}
for an $r$-th root of unity $\zeta\in\mu_r$. 
\item \'Etale-locally near the sections $s_i$ and $s'_{ij}$, the morphism $X'\rightarrow X$ is given by $\calO_S[t_i]\rightarrow \calO_S[t_{ij}']$ with $(t_{ij}')^{r_i}=t_i$ for appropriate choices of $t_i$ and $t'_{ij}$, and the stabilizer of $\frakG$ along $s_{ij}$ is cyclic of order $r_i$ and operates via $t'_{ij}\mapsto \zeta t'_{ij}$, for an $r_i$-th root of unity $\zeta\in \mu_{r_i}$.
\end{enumerate}
\end{definition}


We emphasize that the $\frakG$-action is part of the data; so, in particular, an isomorphism between two admissible $\frakG$-covers has to be a $\frakG$-equivariant isomorphism. As explained in \cite{AbramovichCortiVistoli}, the moduli space $\calHbar_{g,\frakG}(\mu)$ of admissible $\frakG$-covers of stable $n$-marked curves of genus $g$ is a smooth and proper Deligne--Mumford stack over $\Spec \Z[\frac{1}{\vert \frakG\vert}]$ that contains the locus $\calH_{g,\frakG}(\mu)$ of $\frakG$-covers of smooth curves of ramification type $\mu$  as an open substack. The complement of $\calH_{g,\frakG}(\mu)$ is a normal crossing divisor. 

\begin{remark}
Although closely related, the moduli space $\calHbar_{g, \frakG}(\mu)$ is actually not quite the same as the one constructed in \cite{AbramovichCortiVistoli}. The quotient 
\begin{equation*}
\big[\calHbar_{g, \frakG}(\mu)/S_{k_1}\times \ldots \times S_{k_{n}}\big]
\end{equation*}
which forgets about the order of the marked sections on $s'_{ij}$ of $X'$ over $S$ for $i=1,\ldots,n$ and $j=1,\ldots,k_i$, is equivalent to a connected component of the moduli space of twisted stable maps to $\mathbf{B}\frakG$ in the sense of \cite{AbramovichVistoli, AbramovichCortiVistoli}, indexed by ramification profile and decomposition into connected components. Our variant of this moduli space $\calHbar_{g, \frakG}(\mu)$, with ordered sections on $X'$, has also appeared in  \cite{SchmittvanZelm} and in \cite{JarvisKaufmannKimura} (the latter permitting admissible covers with possibly disconnected domains).

An object in $\calHbar_{g, \frakG}(\mu)$ is technically not an admissible $\frakG$-cover $X'\rightarrow X$ but rather a $\frakG$-cover $X'\rightarrow \calX$ of a twisted stable curve $\calX$. A \emph{twisted stable curve} $\calX\rightarrow S$ is a Deligne--Mumford stack $\calX$ with sections $s_1,\ldots, s_n\colon S\rightarrow \calX$ whose coarse moduli space $X\rightarrow S$ is a family of stable curves over $S$ with $n$ marked sections (also denoted by $s_1,\ldots, s_n$) such that 
\begin{enumerate}
\item The smooth locus of $\calX$ is representable by a scheme.
\item The singularities are \'etale-locally given by $\big[\{x'y'=t\}/\mu_r\big]$ for $t\in\calO_S$, where $\zeta\in\mu_r$ acts by $\zeta\cdot(x',y')=(\zeta x',\zeta^{-1}y')$. In this case the singularity in $X'$ is locally given by $xy=t^{r}$.
\item The stack $\calX$ is a root stack $\big[\sqrt[r_i]{s_i/X}\big]$ along the section $s_i$ for all $i=1,\ldots, n$. 
\end{enumerate}

The two notions are naturally equivalent: given an admissible $\frakG$-cover $X'\rightarrow X$, the associated twisted $\frakG$-cover is given by $X'\rightarrow [X'/\frakG]$. Conversely, given a twisted $\frakG$-cover $X'\rightarrow \calX$ in the corresponding connected component, the composition $X'\rightarrow\calX\rightarrow X$ with the morphism to the coarse moduli space $X$ is an admissible $\frakG$-cover. We refer the interested reader to \cite{BertinRomagny} for an alternative construction. 
 \end{remark}

\subsection{From algebraic to tropical $\frakG$-covers}\label{section_fromalgebraictotropical} We now explain how to construct unramified harmonic $\frakG$-covers of weighted graphs and tropical curves from algebraic $\frakG$-covers.

\begin{definition}
Let $F_0:X'_0\to X_0$ be an admissible $\frakG$-cover of stable nodal curves over an algebraically closed field $k$ with $n$ smooth distinct marked points on $X_0$. The \emph{dual harmonic $\frakG$-cover} $f:G'\to G$ is defined as follows:

\begin{enumerate}
\item The graph $G$ is the dual graph of $X_0$, namely the irreducible components of $X_0$ correspond to the vertices of $G$, the nodes correspond to the edges, and the sections correspond to the legs. Similarly, $G'$ is the dual graph of $X'_0$.

\item The vertex weights $g:V(G'_0)\to \ZZ_{\geq 0}$ and $g:V(G_0)\to \ZZ_{\geq 0}$ are the genera of the normalizations of the corresponding irreducible components. 

\item The legs of $G_0$ are marked $l\colon \{1,\ldots n\}\simeq L(G_0)$ according to the full order of the marked points. 

\item The morphism $F_0:X'_0\to X_0$ sends components to components, which defines the morphism $f:V(G')\to V(G)$ on the vertices.

\item Every node $p_{e'}$ of $X'_0$ has a local equation $x'y'=0$, and maps to a node $p_e$ of $X_0$ with local equation $xy=0$ via $(x')^r=x$ and $(y')^r=y$. This defines the map on the half-edges, and $r=d_f(e')$ gives the dilation factor.

\item Let $u'_{ij}$ be a uniformizer at $s'_{ij}$ on $X'_0$. Locally near $s'_{ij}$, the morphism $F_0$ is given by $u'_{ij}=u_i^{r_i}$ for a choice of uniformizer $u_i$ at $s_i$. The dilation factor $d_f(l'_{ij})$ along the leg corresponding to $s'_{ij}$ is equal to $r_i$.

\end{enumerate}

\end{definition}

The operation of $\frakG$ on $X'_0$ induces an operation of $\frakG$ on $G'$ for which the map $f:G'\rightarrow G$ is $\frakG$-invariant. By Definition \ref{def_admissibleGcover} (iii) and (iv), the stabilizer of every edge $e'_i$ and of every leg $l'_{ij}$ is a cyclic group of order $r_i$ and $r_{ij}$, respectively.  Since $F_0:X'_0\rightarrow X_0$ is a principal $\frakG$-bundle away from the nodes, the operation of $\frakG$ on the fiber over each point in $X_0$ is transitive and so $f:G'\rightarrow G$ is a harmonic $\frakG$-cover. Applying the Riemann--Hurwitz formula to the restriction of $F_0$ to each irreducible component of $X'_0$, we observe that $f$ is unramified.

\begin{definition} Let $X$ be a smooth projective curve of genus $g$ over a non-Archimedean field $K$ (whose residue characteristic is zero or coprime to $\vert \frakG\vert$) with $n$ marked points $s_1,\ldots, s_n$ over $K$. Let $(F:X'\rightarrow X,s'_{ij})$ be a $\frakG$-cover of $X$, where $i=1,\ldots,n$ and $j=1,\ldots,k_i$. By the valuative criterion for properness, applied to the stack $\calHbar_{g,\frakG}(\mu)$, there is a finite extension $L$ of $K$ such that $X'_L\rightarrow X_L$ extends to a family of admissible $\frakG$-covers $\calF:\calX'\rightarrow\calX$ defined over the valuation ring $R$ of $L$ (with marked sections also denoted by $s_i$ and $s'_{ij}$). The \emph{dual harmonic $\frakG$-cover} $\phi:\Ga_{X'}\to \Ga_X$ is defined as follows:

\begin{enumerate} \item The graph models of the tropical curves $\Ga_{X'}$ and $\Ga_X$ are the dual graphs $G_{\calX'}$ and $G_{\calX}$ of the special fibers $\calX'_0$ and $\calX_0$, respectively. 

\item The edge length function $\ell\colon E(G_{\calX})\rightarrow\R_{>0}$ associates to an edge $e$ the positive real number $r\cdot\val(t)$, where the corresponding node of $\calX$ is \'etale-locally given by an equation $xy=t^r$ for $t\in R$. We similarly define the edge length function $\ell\colon E(G_{\calX'})\rightarrow\R_{>0}$.

\item The restriction $\calF_0:\calX'_0\to \calX_0$ of $\calF$ to the special fibers is an admissible $\frakG$-cover over $k$, and the underlying graph model for $\phi$ is the dual harmonic $\frakG$-cover $f:G_{\calX'_0}\to G_{\calX_0}$ of $\calF_0$.
    
\end{enumerate}


We note that the models $G_{\calX'}$ and $G_{\calX}$ depend on the choice of extension $\calF$, but the tropical curves $\Ga_{X'}$ and $\Ga_X$ do not. 



\end{definition}

The map $\phi\colon \Gamma_{X'}\rightarrow \Gamma_{X}$ may also seen to be harmonic by \cite[Theorem A]{ABBRI} upon identifying $\Gamma_{X'}$ and $\Gamma_X$ with the non-Archimedean skeletons of $(X')^{\an}$ and $X^{\an}$, respectively. The morphism $\phi:\Ga_{X'}\to \Ga_X$ is unramified because $f$ is unramified. 

\subsection{A modular perspective on tropicalization}
Following the recipe in \cite[Section 3.2.3]{CavalieriMarkwigRanganathan_tropadmissiblecovers} one may construct a tropical moduli space $\calH^{\trop}_{g,\frakG}(\mu)$ as a generalized cone complex that parametrizes isomorphism classes of unramified harmonic $\frakG$-covers with dilation type $\mu$ along the marked legs.

Let us now work over an algebraically closed non-Archimedean field $K$, whose residue characteristic is either zero or coprime to $\vert \frakG\vert$. Denote by $\calH_{g,\frakG}^{\an}(\mu)$ the Berkovich analytic space\footnote{We implicitly work with the underlying topological space of the Berkovich analytic stack $\calH_{g,\frakG}^{\an}(\mu)$, as introduced in \cite[Section 3]{Ulirsch_tropisquot}.} associated to $\calH_{g,\frakG}(\mu)$. The process described in Section \ref{section_fromalgebraictotropical} above defines a natural \emph{tropicalization map}
\begin{equation*}\begin{split}
\trop_{g,\frakG}(\mu)\colon \calH_{g,\frakG}^{\an}(\mu)&\longrightarrow H_{g,\frakG}^{\trop}(\mu)\\
\big[X'\rightarrow X,s_i,s'_{ij}\big]& \longmapsto \big[(\Gamma_{X'},g')\rightarrow(\Gamma_X,g)\big]
\end{split}\end{equation*}
that associates to an admissible $\frakG$-cover $X'\rightarrow X$ of smooth curves over a non-Archimedean extension $L$ of $K$ an unramified tropical $\frakG$-cover $\Gamma_{X'}\rightarrow \Gamma_X$ of the dual tropical curve $\Gamma_X$ of $X$.

Since the boundary of $\calHbar_{g,\frakG}(\mu)$ has normal crossings, the open immersion $\calH_{g,\frakG}(\mu)\hookrightarrow \calHbar_{g,\frakG}(\mu)$ is a toroidal embedding in the sense of \cite{KKMSD}. Therefore, as explained in \cite{Thuillier_toroidal, ACP, Ulirsch_nonArchTeich}, there is a natural strong deformation retraction $\rho_{g,\frakG}\colon \calH_{g,\frakG}^{\an}(\mu)\rightarrow \calH_{g,\frakG}^{\an}(\mu)$ onto a closed subset of $\calH_{g,\frakG}^{\an}(\mu)$ that carries the structure of a generalized cone complex, the \emph{non-Archimedean skeleton} $\Sigma_{g,\frakG}(\mu)$ of $\calH_{g,\frakG}^{\an}(\mu)$. Expanding on \cite[Theorem 1 and 4]{CavalieriMarkwigRanganathan_tropadmissiblecovers}, we have:

\begin{theorem}\label{thm_skeletonvstropicalization}
The tropicalization map 
$\trop_{g,\frakG}(\mu)\colon\calH_{g,\frakG}^{\an}(\mu)\longrightarrow H_{g,\frakG}^{\trop}(\mu)$
factors through the retraction to the non-Archimedean skeleton $\Sigma_{g,\frakG}(\mu)$ of $\calH_{g,\frakG}^{\an}(\mu)$, so that the restriction 
\begin{equation*}
\trop_{g,\frakG}(\mu)\colon \Sigma_{g,\frakG}(\mu)\longrightarrow H^{\trop}_{g,\frakG}(\mu)
\end{equation*}
to the skeleton is a finite strict morphism of generalized cone complexes. Moreover, the diagram
\begin{equation}\label{eq_functorialityoftrop}\begin{tikzcd}
\calH_{g,\frakG}^{\an}(\mu)\arrow[rr,"\src_{g,\frakG}^{\an}(\mu)"] \arrow[dd,"\tar_{g,\frakG}^{\an}(\mu)"']\arrow[rd,"\trop_{g,\frakG}(\mu)"]& & \calM_{g',k}^{\an}\arrow[d,"\trop_{g',k}"]\\
 & H_{g,\frakG}^{\trop}(\mu) \arrow[r,"\src_{g,\frakG}^{\trop}(\mu)"]\arrow[d,"\tar_{g,\frakG}^{\trop}(\mu)"']& M_{g',k}^{\trop}\\
\calM_{g,n}^{\an}\arrow[r,"\trop_{g,n}"']& M_{g,n}^{\trop} &
\end{tikzcd}\end{equation}
commutes.
\end{theorem}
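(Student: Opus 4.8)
The plan is to follow the strategy of \cite{ACP} and \cite{CavalieriMarkwigRanganathan_tropadmissiblecovers}, adapting it to the presence of the $G$-action and of the ordered ramification sections. The retraction $\rho_{g,G}$ onto the skeleton $\Sigma_{g,G}(\mu)$, together with its structure of generalized cone complex, is already in place from the discussion preceding the theorem, so the first real step is to describe $\Sigma_{g,G}(\mu)$ combinatorially. I would show that the boundary strata of $\calHbar_{g,G}(\mu)$ are in bijection with isomorphism classes of combinatorial types $(\ph\colon\Gamma'\to\Gamma,l,l')$ of admissible $G$-covers, i.e.\ with the objects of the index category $J_{g,G}(\mu)$ of Prop.~\ref{prop_HgG=genconecomplex}: the codimension of such a stratum is $\#E(\Gamma)$, the branches of the boundary divisor meeting it correspond to the nodes of the special fibre of $\calX$ and hence to the edges of $\Gamma$, and specialization of strata corresponds to weighted edge contractions of $\Gamma$, which by Prop.~\ref{prop:edgecontraction} induce unramified $G$-covers of the contractions. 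Consequently the cone attached to such a stratum is canonically $\sigma_{\ph}=\R_{\geq0}^{E(\Gamma)}$, compatibly with contractions, so that $\Sigma_{g,G}(\mu)$ is, as a topological generalized cone complex, the colimit of the $\sigma_{\ph}$ over $J_{g,G}(\mu)$ --- exactly the diagram defining $H_{g,G}^{trop}(\mu)$; the only possible discrepancy between the two is the integral-affine structure.

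Next I would compute $\trop_{g,G}(\mu)$ cone by cone and match it with $\rho_{g,G}$. Given a $K$-point $[X'\to X,s_i,s'_{ij}]$ of $\calH_{g,G}^{an}(\mu)$, the valuative criterion of properness for the proper stack $\calHbar_{g,G}(\mu)$ provides, after a finite extension of $K$, a family of admissible $G$-covers over a valuation ring. Its image under $\rho_{g,G}$ lies in the cone $\sigma_{\ph}$ of the associated dual combinatorial type, with coordinates $\val(t_e)$ the valuations of the smoothing parameters of the twisted nodes; whereas $\trop_{g,G}(\mu)$ records the dual tropical cover, whose edge length at $e$ is $d_{\ph}(e)\cdot\val(t_e)$ by the definition of the dual tropical curve recalled above. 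Hence on each cone $\trop_{g,G}(\mu)$ equals the rescaling $\sigma_{\ph}\to\sigma_{\ph}$, $(\val(t_e))_e\mapsto(d_{\ph}(e)\val(t_e))_e$, precomposed with $\rho_{g,G}$; this proves the factorization through the skeleton, and shows that the induced morphism $\Sigma_{g,G}(\mu)\to H_{g,G}^{trop}(\mu)$ carries each cone onto a cone by a linear isomorphism of underlying rational cones of lattice index $\prod_{e}d_{\ph}(e)$, and has finite fibres (by Rem.~\ref{rem:finitelymanycovers}); hence it is a finite strict morphism of generalized cone complexes.

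Finally I would check commutativity of \eqref{eq_functorialityoftrop}. The morphisms $\src$ and $\tar$ of moduli stacks are toroidal and send boundary to boundary: $\src$ remembers the twisted curve $X'$ and $\tar$ remembers $\calX$, so on skeleta they induce the combinatorial maps $(\ph\colon\Gamma'\to\Gamma,l,l')\mapsto(\Gamma',l')$ and $\mapsto(\Gamma,l)$, which on cone coordinates are the evident linear maps incorporating the dilation factors $d_{\ph}(e)$ (via $\ell'(e')=\ell(\ph(e'))/d_{\ph}(e')$) --- and these are by construction $\src_{g,G}^{trop}(\mu)$ and $\tar_{g,G}^{trop}(\mu)$. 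Since the skeleta of $\calM_{g',k}^{an}$ and $\calM_{g,n}^{an}$ are $M_{g',k}^{trop}$ and $M_{g,n}^{trop}$ by \cite{ACP}, and all three tropicalization maps in the diagram are computed by the same recipe --- take a semistable model and form the dual graph with scaled edge lengths --- the two squares of \eqref{eq_functorialityoftrop} commute.

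I expect the main obstacle to be the bookkeeping in the first two steps: correctly pinning down the ACP integral-affine structure on $\Sigma_{g,G}(\mu)$ from the \'etale-local toroidal charts of $\calHbar_{g,G}(\mu)$ at a boundary stratum --- which involves the cyclic stabilizers of the $G$-action at the twisted nodes and the automorphisms of the combinatorial type --- and verifying that, once the coordinates are rescaled by the dilation factors $d_{\ph}(e)$, this structure matches the one carried by $H_{g,G}^{trop}(\mu)$ in Prop.~\ref{prop_HgG=genconecomplex}, so that the morphism is genuinely finite rather than an isomorphism. This is the analogue for $G$-admissible covers of \cite[Theorems 1 and 4]{CavalieriMarkwigRanganathan_tropadmissiblecovers}.
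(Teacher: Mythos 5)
Your proposal follows essentially the same route as the paper's proof: one works in the formal/toroidal local coordinates at a closed boundary point, identifies the retraction to the skeleton with the tropicalization map cone by cone (up to the rescaling of coordinates by the dilation factors $d_\ph(e)$), and then deduces strictness, finiteness of fibres, and commutativity of the diagram exactly as you outline. The one point you flag as the main obstacle --- matching the monodromy action of $\pi_1$ of a boundary stratum on its cone with the action of $\Aut(\Gamma'\to\Gamma)$ --- is resolved in the paper by observing that both actions factor through the corresponding actions for $\calMbar_{g,n}$ and for $\Gamma$ respectively, and then invoking \cite[Proposition 7.2.1]{ACP}.
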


In other words, the restriction of $\trop_{g,\frakG}(\mu)$ onto a cone in $\Sigma_{g,\frakG}(\mu)$ is an isomorphism onto a cone in $H^{\trop}_{g,\frakG}(\mu)$ and every cone in $H_{g,\frakG}^{\trop}(\mu)$ has at most finitely many preimages in $\Sigma_{g,\frakG}(\mu)$. Theorem \ref{thm_skeletonvstropicalization}, in particular, implies that the tropicalization map $\trop_{g,\frakG}(\mu)$ is well-defined, continuous, and proper.

The proof is almost word for word the same as the one of \cite[Theorems 1 and 4]{CavalieriMarkwigRanganathan_tropadmissiblecovers}. We need to observe that the construction in \cite{CavalieriMarkwigRanganathan_tropadmissiblecovers} is compatible with the $\frakG$-operation on both the algebraic and the tropical side. Moreover, using \cite[Section 4.5]{Ulirsch_nonArchTeich}, one can extend the construction of a non-Archimedean skeleton from \cite{Thuillier_toroidal, ACP} to a possibly non-trivially valued base field $K$. We leave the details to the avid reader, since the statement of Theorem \ref{thm_skeletonvstropicalization} is not strictly used in the remainder of this article.

\section{Realizability of abelian harmonic covers}\label{section_realizability}







In this section, we return to the abelian case and fix a finite abelian group $\frakA$. We show that the $\frakA$-dilation datum of a harmonic $\frakA$-cover $\phi:\Ga'\to \Ga$ that is obtained by tropicalizing an algebraic $\frakA$-cover has a simple cohomological description. Conversely, we show that any harmonic $\frakA$-cover whose $\frakA$-dilation datum admits such a description comes from an algebraic $\frakA$-cover. This gives us an elementary necessary condition for realizability (see Corollary~\ref{cor:nobridges}), and other similar conditions can be readily found. 

We begin by giving the definition of realizability for weighted graphs and for tropical curves. 

\begin{definition} Let $k$ be an algebraically closed field. \label{def:realizability}

\begin{enumerate}
    \item An unramified harmonic $\frakA$-cover of weighted graphs $f:G'\to G$ is \emph{realizable} over $k$ if there exists an admissible $\frakA$-cover $X'_0\to X_0$ of stable nodal curves over $k$ whose dual harmonic $\frakA$-cover is $f$. 

    \item An unramified harmonic $\frakA$-cover of tropical curves $\phi:\Ga'\to \Ga$ is \emph{realizable} over $k$ if there exist a non-Archimedean field $K$ whose residue field is $k$ and a Galois $\frakA$-cover $F:X'\to X$ of smooth projective curves over $K$ such that $\phi$ is the tropicalization of $F$. 
    
\end{enumerate}

\end{definition}

\subsection{From Galois covers to extended homology} \label{sec:extended} Let $K$ be a non-Archimedean field with valuation ring $R$ and residue field $k$, whose characteristic $p$ is either zero or coprime to $\vert \frakA\vert$. Let $F\colon X'\to X$ be a finite $\frakA$-cover of smooth projective curves over $K$ (where $X'$ may be disconnected), which is ramified precisely at $n'$ marked ramification points $p'_1,\ldots, p'_{n'}\in X'$ over a collection of marked branch points $p_1,\ldots, p_n\in X$. Let $\calF\colon\calX'\to \calX$ be an extension of $X'\to X$ to a family of admissible $\frakA$-covers over $R$ (where we may have to replace $K$ by a finite extension, as above). Let $\phi\colon \Gamma_{X'}\rightarrow \Gamma_{X}$ be the induced tropical harmonic $\frakA$-cover with model $f:G_{\calX'}\to G_{\calX}$ (which depends on the choice of $\calF$ extending $F$).

Let $v\in V(G_{\calX})$ be a vertex, then the smooth locus $X^*_v$ of the irreducible component $X_v$ is a genus $g(v)$ curve over $k$ with $\val(v)$ punctures. The $\frakA$-cover $\calF^{-1}(X^*_v)\to X^*_v$ is determined by a monodromy representation $m_v:\pi_1^{\et}(X^*_v,x_0)\to \frakA$. Since $\frakA$ is abelian, the choice of base point is irrelevant, and the representation can be recorded by a tuple of elements of $\frakA$ in the following way. Let
\begin{equation*}
\Pi_{g(v),\val(v)}=\big\langle \alpha_1,\ldots,\alpha_{g(v)},\beta_1\ldots,\beta_{g(v)},\ga_1,\ldots,\ga_{\val(v)}\ \mid \ [\alpha_1,\beta_1]\cdots[\alpha_{g(v)},\beta_{g(v)}]\ga_1\cdots\ga_{\val(v)}=1\big\rangle 
\end{equation*}
be the fundamental group of a genus $g(v)$ Riemann surface with $\val(v)$ punctures, where the $\ga_j$ are small loops around the punctures. By a theorem of Grothendieck (see e.g.\ \cite[Theorem 4.9.1]{Szamuely}),  
the \'etale fundamental group $\pi_1^\et(X^*_v,x_0)$ is  the profinite completion of $\Pi_{g(v),\val(v)}$
when $p=0$ and the prime-to-$p$ profinite completion of $\Pi_{g(v),\val(v)}$ when $p>0$.
Since $\vert \frakA\vert$ is coprime to $p$, every continuous homomorphism $\pi_1^\et(X^*_v,x_0)\rightarrow \frakA$ (where $\frakA$ is equipped with the discrete topology) is uniquely determined by a homomorphism $\varphi:\Pi_{g(v),\val(v)}\rightarrow \frakA$ that factors as 
\begin{equation*}
\Pi_{g(v),\val(v)}\longrightarrow \pi_1^\et(X^*_v,x_0) \longrightarrow \frakA \ .
\end{equation*}
Hence the monodromy representation $m_v\colon \pi_1^{\et}(X^*_v,x_0)\to \frakA$ is uniquely determined by the images
\[
\xi(v)_i=\varphi(\alpha_i)\in \frakA\quad \textrm{and} \quad \xi(v)_{g(v)+i}=\varphi(\beta_i)\in\frakA\quad \textrm{for}\quad i=1,\ldots,g(v),
\]
of the $\alpha_i$ and $\beta_i$, which may be arbitrary, as well as the images
\[
\eta(h)=\varphi(\ga_j)\in \frakA\quad\textrm{for}\quad j=1,\ldots,\val (v),
\]
where $h\in H(G_{\calX})$ is the half-edge corresponding to the $j$-th puncture on $X_v$. We note that $\eta(h)$ acts by multiplication by a primitive $r$-th root of unity in an \'etale neighborhood of $p_{h'}$, where $r=d_{\phi}(h')$. The unique relation in the group $\Pi_{g(v),\val(v)}$ implies that the elements $\eta(h)$ satisfy
\[
\sum_{j=1}^{\val(v)}\varphi(\ga_j)=\sum_{h\in T_vG_{\calX}}\eta(h)=0.
\]
Furthermore, for each pair of nodes $e=\{h,h'\}$ we have $\eta(h)+\eta(h')=0$.


In other words, to an algebraic $\frakA$-cover $X'\to X$ we associate the following data on the graph $G_{\calX}$:

\begin{enumerate} 

\item An element $\eta(h)\in \frakA$ for each $h\in H(G_{\calX})$, so that $\eta(h)+\eta(h')=0$ for any edge $e=\{h,h'\}$ in $E(G_\calX)$ and $\sum_{h\in T_v G_\calX} \eta(h)=0$ for any vertex $v\in V(G_\calX)$.

\item An element $\xi(v)\in \frakA^{2g(v)}$ for every vertex $v\in V(G_\calX)$.

\end{enumerate}

The collection of all $\eta(h)$ is nothing but a class $\eta\in H_1(G_\calX,\frakA)$ in the simplicial homology of the graph $G_{\calX}$ with coefficients in $\frakA$. Similarly, each $\frakA^{2g(v)}$ can be thought of as the simplicial homology group (with coefficients in $\frakA$) of an infinitesimal genus $g(v)$ graph located at the vertex $v$. This motivates the following definition.

\begin{definition} Let $(G,g)$ be a weighted graph. The \emph{extended homology group} of $G$ with coefficients in $\frakA$ is the finite abelian group
\[
H^{\ext}_1(G,\frakA)=H_1(G,\frakA)\oplus \bigoplus_{v\in V(G)} \frakA^{2g(v)}.
\]
Given a harmonic $\frakA$-cover $f:G_{\calX'}\to G_{\calX}$ of weighted graphs that is the tropicalization of an algebraic $\frakA$-cover $F:X'\to X$, the datum $(\eta,\xi)\in H_1^{\ext}(G_{\calX},\frakA)$ defined above is called the \emph{$\frakA$-monodromy datum} associated to the cover. 

\end{definition}

We note that if $|\frakA|$ is coprime to $p$, then the group of continuous homomorphisms from $\pi_1^{\et}(X,x)$ to $\frakA$ is naturally identified with $H^1_{\et}(X,\frakA)$ (see Example 11.3 in \cite{Milne}), and hence the discussion above provides us with a natural homomorphism
 \begin{equation*}
H^1_\et(X,\frakA)\longrightarrow H^{\ext}_1(G_{\calX},\frakA).
\end{equation*}
Note that we pass from cohomology to homology, as $G_{\calX}$ is the dual graph of the special fiber $\calX_0$.

\subsection{Extended homology, dilation, and realizability}  We now observe that the $\frakA$-dilation datum of a harmonic $\frakA$-cover $f:G_{\calX'}\to G_{\calX}$ obtained by tropicalizing an algebraic $\frakA$-cover can be read off from the $\frakA$-monodromy datum $(\eta,\xi)$. Indeed, let $v'\in V(G_{\calX'})$ be a vertex mapping to $v=f(v')$. The restricted $\frakA$-cover $f^{-1}(X^*_v)\to X^*_v$ corresponds to the monodromy representation $m_v:\pi_1^{\et}(X_v^*,x_0)\to \frakA$, so an element of $\frakA$ preserves the irreducible component $X^*_{v'}\subseteq f^{-1}(X^*_v)$ (in other words, fixes $v'$) if and only if it is in the image of $m_v$, which is generated by the images of the $\alpha_i$, $\beta_i$, and $\gamma_j$. Similarly, the stabilizer of a node $p_{h'}\in \calX'_0$ mapping to $p_h\in \calX_0$ is generated by $\eta(h)\in \frakA$. Hence we give the following definition.


\begin{definition} Let $(G,g)$ be a weighted graph, and let $(\eta,\xi)\in H_1^{\ext}(G,\frakA)$ be an element of the extended homology group of $G$ with coefficients in $\frakA$. The \emph{associated $\frakA$-dilation datum} on $G$ is defined as follows:

\begin{enumerate} \item For a half-edge $h\in H(G)$, the dilation group $D(h)$ is the cyclic subgroup of $\frakA$ generated by the element $\eta(h)$.

\item For a vertex $v\in V(G)$, the dilation group $D(v)$ is the subgroup of $\frakA$ generated by $\eta(h)$ for all $h\in T_vG$ and by the entries of $\xi(v)$. 

\end{enumerate}

\end{definition}

We are now ready to state and prove our realizability criterion.

\begin{theorem} Let $k$ be an algebraically closed field whose characteristic is either zero or is relatively prime to $|\frakA|$.

\begin{enumerate}
    \item An unramified harmonic $\frakA$-cover $f:G'\to G$ of weighted graphs is realizable over $k$ if and only if the $\frakA$-dilation datum $D$ of $f$ is associated to some element $(\eta,\xi)\in H_1^{\ext}(G,\frakA)$ of the extended homology group of $G$ with coefficients in $\frakA$.

    \item An unramified harmonic $\frakA$-cover $\phi:\Ga'\to \Ga$ of tropical curves is realizable over $k$ if and only if it admits a realizable model $f:G'\to G$.
    
\end{enumerate}

\label{thm:main}
    
\end{theorem}

\begin{proof} Let $F_0:X'_0\to X_0$ be an admissible $\frakA$-cover of stable nodal curves over $k$, and let $f:G'\to G$ be the dual cover. The discussion in Section~\ref{sec:extended} produces an $\frakA$-monodromy datum $(\eta,\xi)\in H_1^{\ext}(G,\frakA)$ that defines the $\frakA$-dilation datum of $f$. Similarly, let $F:X'\to X$ be a $\frakG$-cover of smooth projective curves over a non-Archimedean field $K$ with residue field $k$, then the dual cover $\phi:\Ga_{X'}\to \Ga_X$ of tropical curves has a realizable model. 


Conversely, suppose that the $\frakA$-dilation datum $D$ of an unramified harmonic $\frakA$-cover $f:G'\to G$ is associated to an element $(\eta,\xi)\in H_1^{\ext}(G,\frakA)$. We reverse the procedure and construct an admissible $\frakA$-cover $F_0:X'_0\to X_0$ over $k$ tropicalizing to $f$, as follows. For each vertex $v\in V(G)$, choose a smooth $k$-curve $X^*_v$ of genus $g(v)$ with $|T_vG|$ punctures. The monodromy element $(\eta,\xi)$ induces a monodromy representation $m_v:\pi_1^{\et}(X^*_v,x_0)\to \frakA$ at each $v\in V(G)$ and hence an $\frakA$-cover of each $X^*_v$. We then glue these covers according to the incidence data of the graphs $G'$ and $G$ to obtain an admissible $\frakA$-cover $F_0:X'_0\to X_0$, where the $X_v$ are the irreducible components of $X_0$. Hence $f$ is realizable. 

If $f:G'\to G$ is the underlying model for a harmonic $\frakA$-cover $\phi:\Ga'\to \Ga$, then the admissible $\frakA$-cover $F_0:X'_0\to X_0$ can be smoothened to a family $\calF:\calX'\to \calX$ by the smoothness of the moduli space of admissible $\frakA$-covers over $\Spec \Z[\frac{1}{\vert \frakA\vert}]$ (see \cite[Theorem 3.0.2]{AbramovichCortiVistoli}). Alternatively, one may also use the smoothening result for harmonic covers of metrized curve complexes from \cite{ABBRI} and observe that their smoothing is compatible with group operations.
\end{proof}

This criterion can be used to establish elementary graph-theoretic restrictions on realizable harmonic $\frakA$-covers. First, we note that if the dilation group of any half-edge is not cyclic, then the cover is not realizable. Now let $G$ be a weighted graph with a bridge edge $e\in E(G)$. Any $\frakA$-monodromy datum $(\eta,\xi)\in H_1^{\ext}(G,\frakA)$ vanishes on $e$, so the dilation group of any realizable $\frakA$-cover is trivial along $e$. Hence we have established the following necessary condition for realizability.

\begin{corollary} \label{cor:nobridges}
Let $\phi\colon \Gamma'\rightarrow\Gamma$ be a harmonic $\frakA$-cover of metric graphs. If any bridge edge of $\Ga$ is dilated, then $f$ is not realizable.
\end{corollary}

In the next section, we show that, for most simple abelian groups, this condition is in fact sufficient. Similarly, if $G$ has a pair of parallel edges $e_1$ and $e_2$, then the dilation groups of any realizable $\frakA$-cover are equal on $e_1$ and $e_2$, since $\eta(e_2)=\pm \eta(e_1)$ for any $(\eta,\xi)\in H_1^{\ext}(G,\frakA)$.


\section{Cyclic covers and the nowhere-zero flow problem}\label{sec:nowherezeroflowproblem}

In this section, we discuss unramified harmonic $\frakA$-covers of a weighted graph $(G,g)$ without legs in the case where $\frakA=\ZZ/p\ZZ$ is a cyclic group of prime order. We show that the realizability problem for such covers is closely related to a classical problem in graph theory. 


Let $G$ be a graph and let $n\geq 2$. A \emph{nowhere-zero $n$-flow} on $G$ is an element $\eta\in H_1(G,\ZZ/n\ZZ)$ such that $\eta(e)\neq 0$ for all $e\in E(G)$.  The problem is to determine sufficient conditions for the existence of a nowhere-zero $n$-flow on $G$. It is clear that $G$ admits a nowhere-zero $n$-flow for any $n\geq 2$ only if $G$ has no bridges, and that it admits a nowhere-zero $2$-flow if and only if $\val(v)$ is even for all $v\in V(G)$. On the other hand, Seymour's $6$-flow Theorem \cite{Seymour_nowherezero6flows} states that any bridgeless graph admits a nowhere-zero $n$-flow for $n=6$, and it easily follows that this holds for any $n\geq 7$ as well. The intermediate cases $n=3$, $4$, and $5$, however, are not currently known. Tutte's conjecture states that every bridgeless graph has a nowhere-zero $5$-flow \cite[Conjecture II]{1954TutteChromatic}. 



Consider an unramified harmonic $\ZZ/p\ZZ$-cover $f:G'\to G$ of weighted graphs, where $p\geq 2$ is a prime number. Let $G_{\dil}\subseteq G$ be the dilation subgraph, consisting of those vertices and half-edges whose dilation group is $\ZZ/p\ZZ$. We now use the theory of nowhere-zero $n$-flows to show that the realizability of $f$ is determined by the structure of $G_{\dil}$.

\begin{theorem} Let $p$ be a prime number, and let $f:G'\to G$ be an unramified harmonic $\ZZ/p\ZZ$-cover of weighted graphs with no legs. If $p=2$, then $f$ is realizable. If $p\geq 7$, then $f$ is realizable if and only if the dilation subgraph $G_{\dil}\subseteq G$ has no bridges. If Tutte's conjecture holds, the same is true for $p=5$.
\end{theorem} 


\begin{proof} We recall the results of Example~\ref{ex:semistable}. For $v\in V(G_{\dil})$, denote its valency in $G_{\dil}$ by $d(v)$. We showed that the dilation subgraph $G_{\dil}\subseteq G$ is semistable: for each $v\in V(G_{\dil})$, either $d(v)\geq 2$ or $g(v)\geq 1$ (in addition, $d(v)$ is even if $p=2$). Unwrapping the definitions, Theorem~\ref{thm:main} implies that the cover $f:G'\to G$ is realizable if and only if there exists an element $(\eta,\xi)\in H_1^{\ext}(G,\ZZ/p\ZZ)$ satisfying the following conditions:

\begin{enumerate}
    \item $\eta(e)\neq 0$ if and only if $e\in E(G_{\dil})$.

    \item If $v\in V(G)$ is a vertex with no adjacent dilated edges, then $v\in V(G_{\dil})$ if and only if $\xi(v)\neq 0$.
    
\end{enumerate}

The first condition is always satisfied when $p=2$: setting $\eta(e)=1$ if and only if $e\in E(G_{\dil})$ gives a cycle $\eta\in H_1(G,\ZZ/2\ZZ)$, since $d(v)$ is even for all $v\in V(G_{\dil})$ (see \cite[Lemma 5.9]{2018JensenLen}). For $p\geq 7$, Seymour's theorem implies that we can find such an $\eta$ if and only if $G_{\dil}$ has no bridges, and Tutte's conjecture implies the same for $p=5$. The second condition, on the other hand, is trivially satisfied: if $v\in V(G_{\dil})$ is a dilated vertex with $d(v)=0$, then $g(v)\geq 1$ and hence we can pick $\xi(v)$ to be any nontrivial element of $(\ZZ/p\ZZ)^{2g(v)}$, and similarly we can set $\xi(v)=0$ for all $v\in V(G_{\dil})\backslash V(G)$. This completes the proof.
\end{proof}

We note that our results allow us to restate Tutte's conjecture in a purely algebraic form. Let $G$ be a bridgeless graph, and let $X_0$ be a nodal curve whose dual graph is $G$ (over any algebraically closed field of characteristic not equal to $5$). Suppose that $X_0$ admits an admissible $\ZZ/5\ZZ$-cover $X'_0\to X_0$ that is ramified at each node of $X_0$. The dual harmonic $\ZZ/5\ZZ$-cover $f:G'\to G$ has $G_{\dil}=G$, hence the $\ZZ/5\ZZ$-monodromy datum $(\eta,\xi)$ satisfies $\eta(e)\neq 0$ for all $e\in E(G)$.  Tutte's conjecture is  now equivalent to the following:

\begin{conjecture} Let $k$ be an algebraically closed field with $\characteristic k\neq 5$. Every nodal curve over $k$ with no separating nodes has an admissible $\ZZ/5\ZZ$-cover that is ramified at each node. 

\end{conjecture}

 \bibliographystyle{amsalpha}
\bibliography{biblio}{}

\providecommand{\bysame}{\leavevmode\hbox to3em{\hrulefill}\thinspace}
\providecommand{\MR}{\relax\ifhmode\unskip\space\fi MR }
\providecommand{\MRhref}[2]{%
  \href{http://www.ams.org/mathscinet-getitem?mr=#1}{#2}
}
\providecommand{\href}[2]{#2}
\begin{thebibliography}{KKMSD73}

\bibitem[AB15]{AminiBaker}
Omid Amini and Matthew Baker, \emph{Linear series on metrized complexes of
  algebraic curves}, Math. Ann. \textbf{362} (2015), no.~1-2, 55--106.

\bibitem[ABBR15a]{ABBRI}
Omid Amini, Matthew Baker, Erwan Brugall\'e, and Joseph Rabinoff, \emph{Lifting
  harmonic morphisms {I}: metrized complexes and {B}erkovich skeleta}, Res.
  Math. Sci. \textbf{2} (2015), Art. 7, 67.

\bibitem[ABBR15b]{ABBRII}
Omid Amini, Matthew Baker, Erwan Brugall\'{e}, and Joseph Rabinoff,
  \emph{Lifting harmonic morphisms {II}: {T}ropical curves and metrized
  complexes}, Algebra Number Theory \textbf{9} (2015), no.~2, 267--315.

\bibitem[ACP15]{ACP}
Dan Abramovich, Lucia Caporaso, and Sam Payne, \emph{The tropicalization of the
  moduli space of curves}, Ann. Sci. \'{E}c. Norm. Sup\'{e}r. (4) \textbf{48}
  (2015), no.~4, 765--809.

\bibitem[ACV03]{AbramovichCortiVistoli}
Dan Abramovich, Alessio Corti, and Angelo Vistoli, \emph{Twisted bundles and
  admissible covers}, Comm. Algebra \textbf{31} (2003), no.~8, 3547--3618,
  Special issue in honor of Steven L. Kleiman.

\bibitem[AV02]{AbramovichVistoli}
Dan Abramovich and Angelo Vistoli, \emph{Compactifying the space of stable
  maps}, J. Amer. Math. Soc. \textbf{15} (2002), no.~1, 27--75.

\bibitem[Bas93]{Bass}
Hyman Bass, \emph{Covering theory for graphs of groups}, J. Pure Appl. Algebra
  \textbf{89} (1993), no.~1-2, 3--47.

\bibitem[BBC17]{2017BologneseBrandtChua}
Barbara Bolognese, Madeline Brandt, and Lynn Chua, \emph{From curves to
  tropical {J}acobians and back}, Combinatorial algebraic geometry, Fields
  Inst. Commun., vol.~80, Fields Inst. Res. Math. Sci., Toronto, ON, 2017,
  pp.~21--45.

\bibitem[BH20]{2017BrandtHelminck}
Madeline Brandt and Paul~Alexander Helminck, \emph{Tropical superelliptic
  curves}, Adv. Geom. \textbf{20} (2020), no.~4, 527--551.

\bibitem[BN09]{2009BakerNorine}
Matthew Baker and Serguei Norine, \emph{Harmonic morphisms and hyperelliptic
  graphs}, Int. Math. Res. Not. IMRN (2009), no.~15, 2914--2955.

\bibitem[BR11]{BertinRomagny}
Jos\'{e} Bertin and Matthieu Romagny, \emph{Champs de {H}urwitz}, M\'{e}m. Soc.
  Math. Fr. (N.S.) (2011), no.~125-126, 219.

\bibitem[Cap14]{Caporaso_gonality}
Lucia Caporaso, \emph{Gonality of algebraic curves and graphs}, Algebraic and
  complex geometry, Springer Proc. Math. Stat., vol.~71, Springer, Cham, 2014,
  pp.~77--108.

\bibitem[CF17]{ChiodoFarkas}
Alessandro Chiodo and Gavril Farkas, \emph{Singularities of the moduli space of
  level curves}, J. Eur. Math. Soc. (JEMS) \textbf{19} (2017), no.~3, 603--658.

\bibitem[Cha13]{2013Chan}
Melody Chan, \emph{Tropical hyperelliptic curves}, J. Algebraic Combin.
  \textbf{37} (2013), no.~2, 331--359.

\bibitem[CLRW22]{creech2022prym}
Steven Creech, Yoav Len, Caelan Ritter, and Derek Wu,
  \emph{Prym--{B}rill--{N}oether loci of special curves}, International
  Mathematics Research Notices \textbf{2022} (2022), no.~4, 2688--2728.

\bibitem[CM16]{CavalieriMiles}
Renzo Cavalieri and Eric Miles, \emph{Riemann surfaces and algebraic curves},
  London Mathematical Society Student Texts, vol.~87, Cambridge University
  Press, Cambridge, 2016, A first course in Hurwitz theory.

\bibitem[CMP20]{CaporasoMeloPacini}
Lucia Caporaso, Margarida Melo, and Marco Pacini, \emph{Tropicalizing the
  moduli space of spin curves}, Selecta Math. (N.S.) \textbf{26} (2020), no.~1,
  Paper No. 16, 44.

\bibitem[CMR16]{CavalieriMarkwigRanganathan_tropadmissiblecovers}
Renzo Cavalieri, Hannah Markwig, and Dhruv Ranganathan, \emph{Tropicalizing the
  space of admissible covers}, Math. Ann. \textbf{364} (2016), no.~3-4,
  1275--1313.

\bibitem[Cor11]{2011Corry}
Scott Corry, \emph{Genus bounds for harmonic group actions on finite graphs},
  Int. Math. Res. Not. IMRN (2011), no.~19, 4515--4533.

\bibitem[Cor12]{2012Corry}
\bysame, \emph{Harmonic {G}alois theory for finite graphs},
  Galois-{T}eichm\"{u}ller theory and arithmetic geometry, Adv. Stud. Pure
  Math., vol.~63, Math. Soc. Japan, Tokyo, 2012, pp.~121--140.

\bibitem[Cor15]{2015Corry}
\bysame, \emph{Maximal harmonic group actions on finite graphs}, Discrete Math.
  \textbf{338} (2015), no.~5, 784--792.

\bibitem[Eke95]{Ekedahl_Hurwitzboundary}
Torsten Ekedahl, \emph{Boundary behaviour of {H}urwitz schemes}, The moduli
  space of curves ({T}exel {I}sland, 1994), Progr. Math., vol. 129,
  Birkh\"{a}user Boston, Boston, MA, 1995, pp.~173--198.

\bibitem[Gal19a]{2019GaleottiB}
Mattia Galeotti, \emph{Birational geometry of moduli of curves with an
  {${S}_3$}-cover}, arXiv:1905.03487 [math] (2019).

\bibitem[Gal19b]{2019GaleottiA}
\bysame, \emph{Moduli of {$G$}-covers of curves: geometry and singularities},
  arXiv:1905.02889 [math] (2019).

\bibitem[GZ23]{2023GhoshZakharov}
Arkabrata Ghosh and Dmitry Zakharov, \emph{The {P}rym variety of a dilated
  double cover of metric graphs}, arXiv preprint arXiv:2303.03904 (2023).

\bibitem[Hel17]{2017Helminck}
Paul~Alexander Helminck, \emph{Tropicalizing abelian covers of algebraic
  curves}, arXiv:1703.03067 [math] (2017).

\bibitem[Hel21]{Helminck_skeletalfiltrations}
\bysame, \emph{Skeletal filtrations of the fundamental group of a
  non-archimedean curve}, February 2021, arXiv:1808.03541 [math].

\bibitem[JKK05]{JarvisKaufmannKimura}
Tyler~J. Jarvis, Ralph Kaufmann, and Takashi Kimura, \emph{Pointed admissible
  {$G$}-covers and {$G$}-equivariant cohomological field theories}, Compos.
  Math. \textbf{141} (2005), no.~4, 926--978.

\bibitem[JL18]{2018JensenLen}
David Jensen and Yoav Len, \emph{Tropicalization of theta characteristics,
  double covers, and {P}rym varieties}, Sel. Math. New Ser. \textbf{24} (2018),
  1391--1410.

\bibitem[KKMSD73]{KKMSD}
George Kempf, Finn~Faye Knudsen, David Mumford, and Bernard Saint-Donat,
  \emph{Toroidal embeddings. {I}}, Lecture Notes in Mathematics, Vol. 339,
  Springer-Verlag, Berlin-New York, 1973.

\bibitem[Len17]{2017Len}
Yoav Len, \emph{Hyperelliptic graphs and metrized complexes}, Forum Math. Sigma
  \textbf{5} (2017), e20, 15.

\bibitem[Len22]{2022Len}
\bysame, \emph{Chip-firing games, {J}acobians, and {P}rym varieties}, arXiv
  preprint arXiv:2210.14060 (2022).

\bibitem[LU21]{LenUlirsch}
Yoav Len and Martin Ulirsch, \emph{Skeletons of {P}rym varieties and
  {B}rill-{N}oether theory}, Algebra Number Theory \textbf{15} (2021), no.~3,
  785--820.

\bibitem[LZ22]{2022LenZakharov}
Yoav Len and Dmitry Zakharov, \emph{Kirchhoff’s theorem for {P}rym
  varieties}, Forum of Mathematics, Sigma \textbf{10} (2022), e11.

\bibitem[Mil13]{Milne}
James~S. Milne, \emph{Lectures on etale cohomology (v2.21)}, 2013, Available at
  www.jmilne.org/math/, p.~202.

\bibitem[Mir95]{Miranda}
Rick Miranda, \emph{Algebraic curves and {R}iemann surfaces}, Graduate Studies
  in Mathematics, vol.~5, American Mathematical Society, Providence, RI, 1995.

\bibitem[Pan16]{2016Panizzut}
Marta Panizzut, \emph{Theta characteristics of hyperelliptic graphs}, Arch.
  Math. (Basel) \textbf{106} (2016), no.~5, 445--455.

\bibitem[PP06]{PervovaPetronio_HurwitzexistenceI}
Ekaterina Pervova and Carlo Petronio, \emph{On the existence of branched
  coverings between surfaces with prescribed branch data. {I}}, Algebr. Geom.
  Topol. \textbf{6} (2006), 1957--1985.

\bibitem[RW06]{RomagnyWewers}
Matthieu Romagny and Stefan Wewers, \emph{Hurwitz spaces}, Groupes de {G}alois
  arithm\'{e}tiques et diff\'{e}rentiels, S\'{e}min. Congr., vol.~13, Soc.
  Math. France, Paris, 2006, pp.~313--341.

\bibitem[RZ22]{2022RoehrleZakharov}
Felix R{\"o}hrle and Dmitry Zakharov, \emph{The tropical $n$-gonal
  construction}, arXiv preprint arXiv:2210.02267 (2022).

\bibitem[Sa{\"i}97]{Saidi_graphofgroups}
Mohamed Sa{\"i}di, \emph{Rev\^{e}tements mod\'{e}r\'{e}s et groupe fondamental
  de graphe de groupes}, Compositio Math. \textbf{107} (1997), no.~3, 319--338.

\bibitem[Ser80]{Serre_trees}
Jean-Pierre Serre, \emph{Trees}, Springer-Verlag, Berlin-New York, 1980,
  Translated from the French by John Stillwell.

\bibitem[Sey81]{Seymour_nowherezero6flows}
P.~D. Seymour, \emph{Nowhere-zero {$6$}-flows}, J. Combin. Theory Ser. B
  \textbf{30} (1981), no.~2, 130--135.

\bibitem[Son19]{Song_Ginvariantlinearsystems}
JuAe Song, \emph{Galois quotients of metric graphs and invariant linear
  systems}, arXiv:1901.09172 [math] (2019).

\bibitem[SvZ20]{SchmittvanZelm}
Johannes Schmitt and Jason van Zelm, \emph{Intersections of loci of admissible
  covers with tautological classes}, Selecta Math. (N.S.) \textbf{26} (2020),
  no.~5, Paper No. 79, 69.

\bibitem[Sza09]{Szamuely}
Tam\'{a}s Szamuely, \emph{Galois groups and fundamental groups}, Cambridge
  Studies in Advanced Mathematics, vol. 117, Cambridge University Press,
  Cambridge, 2009.

\bibitem[Thu07]{Thuillier_toroidal}
Amaury Thuillier, \emph{G\'{e}om\'{e}trie toro\"{i}dale et g\'{e}om\'{e}trie
  analytique non archim\'{e}dienne. {A}pplication au type d'homotopie de
  certains sch\'{e}mas formels}, Manuscripta Math. \textbf{123} (2007), no.~4,
  381--451.

\bibitem[Tut54]{1954TutteChromatic}
William Tutte, \emph{A contribution to the theory of chromatic polynomials},
  Canad. J. Math. (1954), no.~6, 80--91.

\bibitem[Uli17]{Ulirsch_tropisquot}
Martin Ulirsch, \emph{Tropicalization is a non-{A}rchimedean analytic stack
  quotient}, Math. Res. Lett. \textbf{24} (2017), no.~4, 1205--1237.

\bibitem[Uli21]{Ulirsch_nonArchTeich}
\bysame, \emph{A non-{A}rchimedean analogue of {T}eichm\"{u}ller space and its
  tropicalization}, Selecta Math. (N.S.) \textbf{27} (2021), no.~3, Paper No.
  39, 34.

\bibitem[UZ19]{ulirsch2019tropical}
Martin Ulirsch and Dmitry Zakharov, \emph{Tropical double ramification loci},
  arXiv:1910.01499 (2019).

\end{thebibliography}

\end{document}